\theoremstyle{plain}
\newtheorem{theorem}{Theorem}[section]
\newtheorem{proposition}[theorem]{Proposition}
\newtheorem{lemma}[theorem]{Lemma}
\newtheorem{corollary}[theorem]{Corollary}
\theoremstyle{definition}
\newtheorem{definition}[theorem]{Definition}
\newtheorem{example}[theorem]{Example}
\newtheorem{remark}[theorem]{Remark}
\newcommand{\ann}{{\scriptscriptstyle\perp}}
\DeclareMathOperator{\supp}{supp}
\DeclareMathOperator{\argmax}{argmax}
\DeclareMathOperator{\Cay}{Cay}
\DeclareMathOperator{\CD}{CD}
\DeclareMathOperator{\mix}{mix}
\DeclareMathOperator{\diag}{diag}
\begin{document}
\title[An estimate for positive definite functions]{An estimate for positive definite functions on finite abelian groups and its applications}
\author{Lixia Wang}
\address{State Key Laboratory of Mathematical Sciences, Academy of Mathematics and Systems Science, Chinese Academy of Sciences}
\email{wanglixia@amss.ac.cn}
\author{Ke Ye}
\address{State Key Laboratory of Mathematical Sciences, Academy of Mathematics and Systems Science, Chinese Academy of Sciences}
\email{keyk@amss.ac.cn}

\begin{abstract}
This paper concentrates on positive definite functions on finite abelian groups, which are central to harmonic analysis and related fields.  By leveraging the group structure and employing Fourier analysis,  we establish a lower bound for the second largest value of positive definite functions.  For illustrative purposes,  we present three applications of our lower bound: (a) We obtain both lower and upper bounds for arbitrary functions on finite abelian groups; (b) We derive lower bounds for the relaxation and mixing times of random walks on finite abelian groups.  Notably,  our bound for the relaxation time achieves a quadratic improvement over the previously known one; (c) We determine a new lower bound for the size of the sumset of two subsets of finite abelian groups. 
\end{abstract}

\maketitle

\section{Introduction}
\subsection*{Backgrouand} 
By endowing a finite set with an abelian group structure,  any problem on this set can be reformulated as one involving functions on a finite abelian group.  This perspective enables the introduction of various new techniques to study discrete problems.  For instance,  the technique of sum-of-squares on finite abelian groups has been developed to solve combinatorial optimization problems \cite{FSP16,BGP16,SL23,YYZ24}; When a graph is a Cayley graph,  its graph-theoretic properties are closely related to the representation-theoretic properties of the underlying group \cite{L.Babai,Friedman93,KKRT16,joel}; In the study of random walks,  the group structure of the state space is crucial for estimating the relaxation time and mixing time \cite{Greenhalgh89,Saloff04,LPW09,Hough17}.  Among all functions on a given abelian group,  positive definite functions constitute one of the most important classes.  According to the celebrated Bochner's Theorem,  a function is positive definite if and only if it is the Fourier transform of a non-negative measure.  Over the past half-century,  these functions have played a pivotal role in diverse fields,  including harmonic analysis \cite{Stewart76,  HV11,DS13, Banaszczyk22,Erb22},  probability theory \cite{Levy67, Price70,Heyer77} and quantum physics \cite{Lieb85, Caola91,Kovchegov99}.  

Let $G$ be a finite abelian group and let $f:G \to \mathbb{R}$ be a positive definite function.  In many scenarios \cite{L.Babai,Greenhalgh89, Friedman93,Saloff04,LPW09,KKRT16,joel,Hough17},  the second largest value of $f$:
\[
\nu_2(f) \coloneqq \max\{f(x):x\in G,\; f(x) < f(1)\},
\] 
is the quantity that lies at the core of the problem.  It can be readily shown that $\max_{x\in G} f(x) = f(1)$, where $1\in G$ is the identity element.  However,  determining $\nu_2(f)$ is generally much more challenging,  if not impossible.  The primary focus of this paper is to establish a lower bound for $\nu_2(f)$ and present three applications of this bound.
\subsection*{Main contributions} 
The main result of this paper is Theorem~\ref{cor2.3},  which states that
\begin{equation}\label{eq:main}
\nu_2(f) \ge f(1) \left( 1 - \frac{ \pi^2  |G|  }{2m^2 \left( |G|-2^t(m-1)^{\frac{s-t}{2}} \right)}  \right),
\end{equation}
for any positive integer $m < \left(2^{-t} |G| \right)^{1/r}+1$.  Here $s \coloneqq |\supp(\widehat{f})|$,  $t \coloneqq |\{\chi\in \supp(\widehat{f}): \chi^2 = 1\}|$ and $\supp(\widehat{f})$ is the support of the Fourier transform $\widehat{f}$ of $f$.  Consequently,  if we fix $s$ while allowing $G$ to vary,  then we obtain $\left( f(1) - \nu_2(f) \right)/f(1)  = O(|G|^{-4/s})$.  We illustrate three applications of \eqref{eq:main},  which are summarized as follows.
\begin{enumerate}[(a)]
\item Function value estimation: We obtain both lower and upper bounds of values of arbitrary functions on finite abelian groups (cf.  Proposition~\ref{prop:function value}).
\item Relaxation and mixing time estimation: We establish lower bounds for the relaxation and mixing times of random walks on finite abelian groups (cf.  Propositions~\ref{relaxation time} and \ref{mixing time}).  It is worth remarking that our lower bound for the relaxation time exhibits a quadratic improvement over the existing one \cite{Greenhalgh89,Hough17,MR23}.
\item Sumset size estimation: We derive a new type of lower bound for the size of the sumset of two subsets of finite abelian groups (cf.  Proposition~\ref{lem:sumset}).
\end{enumerate}
\subsection*{Organization of the paper}
The remainder of this paper is organized as follows.  For the reader's convenience,   we provide a brief review in Section~\ref{sec:pre} on Fourier analysis on finite abelian groups and weighted graphs.  In Section~\ref{sec:pd},  we define positive definite functions and establish their fundamental properties.  In Section~\ref{sec:nu2}, we extend the covering argument from graph theory to derive our lower bound \eqref{eq:main}. Additionally, for independent interest, we interpret our result from a graph-theoretic perspective.  Finally,  we present in Section~\ref{sec:app} the three aforementioned applications of our lower bound.
\section{Preliminaries}\label{sec:pre}
\subsection{Fourier Analysis on Finite Abelian Groups}
This subsection provides a brief overview of the Fourier analysis on finite abelian groups.  For more details,  interested readers are referred to \cite{Rudin62,FH91,Terras99,Steinberg12, TV10}.  Let $ G $ be a finite abelian group.  Throughout this paper, unless otherwise stated,  we assume  that the group operation on $G$ is multiplicative and denoted by $(x,y) \mapsto xy$.  

A \emph{character} of $G$ is a group homomorphism $ \chi: G \to \mathbb{C}^{\times}$,  where $\mathbb{C}^{\times} \coloneqq \mathbb{C} \setminus \{ 0 \}$ is the multiplicative group of $\mathbb{C}$.  The \emph{dual group} $\widehat{G}$ of $G$ consists of all characters on $G$,  whose group operation is the pointwise product of functions.  We denote by $L(G)$ the vector space of all $\mathbb{C}$-valued functions on $ G $,  equipped with the complex-valued inner product:
\[
\langle f, g \rangle = \frac{1}{|G|} \sum_{x \in G} f(x) \overline{g(x)},\quad f,g\in L(G).
\]
The \emph{Fourier transform} of $f$ is the function $\widehat{f}: \widehat{G} \to \mathbb{C}$ defined by $\widehat{f}(\chi) \coloneqq \langle f,  \chi \rangle$.  For each $f\in L(G)$,  we have the \emph{Fourier expansion} $f = \sum_{\chi\in \widehat{G}} \widehat{f}(\chi) \chi$,  where $\widehat{f}(\chi)$ is called a \emph{Fourier coefficient} of $f$.  Given $f,g\in L(G)$,  the \emph{convolution} of $f$ and $g$ is a function $f * g \in L(G)$ defined by
\[
(f * g)(x) = \frac{1}{|G|} \sum_{y \in G} f(xy^{-1})g(y),\quad x \in G.
\]
The basic properties of the Fourier transform and convolution are summarized below. 
\begin{theorem}[\cite{Nathanson00}]
For any $f,g\in L(G)$,  we have 
\begin{enumerate}[(a)]
\item $\widehat{(f * g)} = \widehat{f}  \widehat{g}.
$
\item Parseval identity: $\langle f, f \rangle = |G| \langle \widehat{f}, \widehat{f} \rangle = \sum_{\chi \in \widehat{G}} |\widehat{f}(\chi)|^2$.	
\item $\widehat{\widehat{f\,}}(x) = \frac{1}{|G|} f(x^{-1})$.
\end{enumerate}
\end{theorem}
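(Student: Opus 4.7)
The plan is to verify each of (a), (b), (c) by a short direct computation from the definitions, with the only nontrivial input being the orthogonality relation of characters,
\[
\frac{1}{|G|}\sum_{x\in G}\chi(x)\overline{\chi'(x)}=\delta_{\chi,\chi'},\qquad \chi,\chi'\in\widehat{G},
\]
which says $\{\chi:\chi\in\widehat{G}\}$ is an orthonormal basis of $L(G)$ and in particular justifies the Fourier expansion $f=\sum_\chi\widehat{f}(\chi)\chi$ already used in the statement. Everything else follows by unfolding definitions, swapping the order of summation, and invoking this relation.

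For part (a), I would unfold the convolution and Fourier transform to write
\[
\widehat{(f*g)}(\chi)=\frac{1}{|G|^{2}}\sum_{x,y\in G}f(xy^{-1})\,g(y)\,\overline{\chi(x)},
\]
then substitute $z=xy^{-1}$ and use the multiplicativity $\overline{\chi(zy)}=\overline{\chi(z)}\,\overline{\chi(y)}$ to factor the double sum into $\widehat{f}(\chi)\widehat{g}(\chi)$. For part (b), I would insert the Fourier series for one copy of $f$ inside $\langle f,f\rangle=\frac{1}{|G|}\sum_{x}f(x)\overline{f(x)}$ and apply the orthogonality relation; this reduces the sum over $(x,\chi,\chi')$ to $\sum_\chi|\widehat{f}(\chi)|^{2}$. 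The middle identity $|G|\langle\widehat{f},\widehat{f}\rangle=\sum_\chi|\widehat{f}(\chi)|^{2}$ is then merely the definition of the inner product on $L(\widehat{G})$ combined with $|\widehat{G}|=|G|$.

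For part (c), I would apply the Fourier transform on $\widehat{G}$ directly to $\widehat{f}$, identifying $x\in G$ with the evaluation character $\widehat{G}\to\mathbb{C}^{\times}$, $\chi\mapsto\chi(x)$, via Pontryagin duality. Since $|\chi(x)|=1$ and hence $\overline{\chi(x)}=\chi(x^{-1})$, the computation
\[
\widehat{\widehat{f\,}}(x)=\frac{1}{|G|}\sum_{\chi\in\widehat{G}}\widehat{f}(\chi)\overline{\chi(x)}=\frac{1}{|G|}\sum_{\chi\in\widehat{G}}\widehat{f}(\chi)\chi(x^{-1})=\frac{1}{|G|}f(x^{-1})
\]
follows from the Fourier expansion of $f$ evaluated at $x^{-1}$. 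The only real source of friction across the three parts is bookkeeping for the two normalizations, namely the $1/|G|$ built into the inner product and hence into every $\widehat{f}$ versus the unnormalized sums appearing in (a), (b) and (c); once character orthogonality is granted, each identity follows by a routine change in the order of summation.
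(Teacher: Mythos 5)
Your proof is correct. The paper does not prove this theorem---it is stated with only a citation to Nathanson---and your argument is the standard textbook derivation: each identity follows from unfolding the definitions and invoking character orthogonality, together with $|\widehat{G}|=|G|$ and the Pontryagin identification of $G$ with $\widehat{\widehat{G\,}}$ for part (c), with the normalization factors of $1/|G|$ tracked correctly throughout.
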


For each subgroup $H$ of $ G $,  we define
\[
H^{\ann} \coloneqq \{\chi \in \widehat{G}: \chi(x) = 1,\; x \in H\}.
\]
Since there is a natural isomorphism between $ \widehat{\widehat{G\,}} $ and $ G $,  we have $ (H^{\ann})^{\ann} = H$ under this identification.  Moreover,  the subgroups of $G$ and $\widehat{G}$ are in a one-to-one correspondence.
\begin{lemma}\cite[Problem~2.7]{I.M}
\label{Lemma 1.1}
The assignment $H \mapsto H^\ann$ is a bijection between the set of subgroups of $G$ and the set of subgroups of $\widehat{G}$.  Moreover,  $H_1 \subseteq H_2$ if and only if $H_2^\ann \subseteq H_1^\ann$. 
\end{lemma}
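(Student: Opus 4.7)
The plan is to lean on the identity $(H^{\ann})^{\ann}=H$ (under the natural identification $\widehat{\widehat{G\,}}\cong G$), which is already stated in the paragraph preceding the lemma. Once that identity is in hand, the lemma follows from purely formal considerations.

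First, I would briefly check that $H^{\ann}$ is a subgroup of $\widehat{G}$: the trivial character lies in $H^{\ann}$, and if $\chi_1,\chi_2\in H^{\ann}$ then for every $x\in H$ one has $(\chi_1\chi_2^{-1})(x)=\chi_1(x)\chi_2(x)^{-1}=1$. Denote by $\Phi$ the assignment $H\mapsto H^{\ann}$, sending subgroups of $G$ to subgroups of $\widehat{G}$. By the same argument applied to $\widehat{G}$ in place of $G$, we obtain a map $\Psi$ in the opposite direction, $K\mapsto K^{\ann}$, where now $K^{\ann}$ is viewed as a subgroup of $G$ via $\widehat{\widehat{G\,}}\cong G$. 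The stated identity $(H^{\ann})^{\ann}=H$ is exactly the assertion $\Psi\circ\Phi=\mathrm{id}$; by the symmetric role of $G$ and $\widehat{G}$ (using that $\widehat{\widehat{G\,}}\cong G$ is a group isomorphism, hence the Pontryagin duality is involutive), the same relation yields $\Phi\circ\Psi=\mathrm{id}$. This gives the bijection.

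For the order-reversal statement, the forward direction is immediate from the definition: if $H_1\subseteq H_2$, then any character vanishing on $H_2$ must vanish on the subset $H_1$, hence $H_2^{\ann}\subseteq H_1^{\ann}$. For the converse, assume $H_2^{\ann}\subseteq H_1^{\ann}$. Apply the just-established forward implication in $\widehat{G}$ to get $(H_1^{\ann})^{\ann}\subseteq(H_2^{\ann})^{\ann}$, and then use $(H_i^{\ann})^{\ann}=H_i$ to conclude $H_1\subseteq H_2$.

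The main obstacle, if one wanted a fully self-contained argument, would be justifying the identity $(H^{\ann})^{\ann}=H$ itself, which is essentially Pontryagin duality for finite abelian groups. The inclusion $H\subseteq(H^{\ann})^{\ann}$ is tautological. For the reverse inclusion I would argue by cardinality: one shows $|H^{\ann}|=|G|/|H|$ by identifying $H^{\ann}$ with the dual of the quotient $G/H$ via $\chi\mapsto\bar\chi$ (a character of $G$ factors through $G/H$ iff it is trivial on $H$) and using $|\widehat{G/H}|=|G/H|$. Applying this counting formula twice gives $|(H^{\ann})^{\ann}|=|G|/|H^{\ann}|=|H|$, and combined with the trivial inclusion this forces equality. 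In the present paper this fact is invoked as a given, so the proof of the lemma reduces to the short formal argument above.
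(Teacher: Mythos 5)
Your proof is correct. The paper itself does not prove this lemma — it only cites \cite[Problem~2.7]{I.M} — so there is no proof to compare against; your argument (reduce everything to the double-annihilator identity $(H^{\ann})^{\ann}=H$, which the paper states as a fact in the preceding paragraph, then derive bijectivity and both directions of the order-reversal formally, with the cardinality argument via $H^{\ann}\cong\widehat{G/H}$ available as a fallback to justify the identity itself) is the standard one and is complete.
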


We denote by $ \pi: G \to G/H $ the quotient map and let $\pi^\ast: L(G/H) \to L(G)$ be the map defined by $ \pi^\ast(f) = f \circ \pi$.  
\begin{lemma}[\cite{Nathanson00}, Lemma 4.5]
\label{Lemma 1.2}
We have $\pi^*\left( \widehat{G/H}\right) \subseteq H^{\ann}$ and the restriction map $\pi^*: \widehat{G/H} \to H^{\ann}$ is an isomorphism of groups.  
\end{lemma}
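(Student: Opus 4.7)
The plan is to verify four properties of $\pi^{*}$ in turn: (i) that its image lies in $H^{\ann}$, (ii) that it is a group homomorphism, (iii) that it is injective, and (iv) that it is surjective. Conceptually this is nothing more than the universal property of the quotient $G \twoheadrightarrow G/H$ combined with the (contravariant) functoriality of taking the dual, so no deep input is required; the work is simply to keep the bookkeeping straight.

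For (i), I would pick any $\chi \in \widehat{G/H}$ and any $x \in H$ and note that $\pi(x)$ is the identity coset in $G/H$, so $(\pi^{*}\chi)(x) = \chi(1_{G/H}) = 1$; hence $\pi^{*}\chi \in H^{\ann}$ by the definition of the annihilator. Step (ii) is immediate because the group operation on characters is pointwise multiplication: $(\pi^{*}(\chi_{1}\chi_{2}))(x) = (\chi_{1}\chi_{2})(\pi(x)) = \chi_{1}(\pi(x))\chi_{2}(\pi(x))$ for every $x \in G$. For (iii), if $\pi^{*}\chi$ is the trivial character of $G$, then $\chi$ evaluates to $1$ at $\pi(x)$ for every $x \in G$; the surjectivity of $\pi$ then forces $\chi \equiv 1$ on all of $G/H$.

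The one step demanding a genuine construction is the surjectivity (iv). Given $\psi \in H^{\ann}$, I would define $\bar\psi \colon G/H \to \mathbb{C}^{\times}$ by $\bar\psi(xH) \coloneqq \psi(x)$. The well-definedness of $\bar\psi$ is precisely where the hypothesis $\psi|_{H} \equiv 1$ enters: for any $h \in H$, $\psi(xh) = \psi(x)\psi(h) = \psi(x)$, so the value of $\psi$ depends only on the coset $xH$. The multiplicativity of $\bar\psi$ is then inherited directly from $\psi$, and $\pi^{*}(\bar\psi) = \psi$ by construction, which exhibits a preimage of $\psi$ and also serves as a two-sided inverse to $\pi^{*}$.

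I do not anticipate any real obstacle beyond this single well-definedness check, which is essentially the entire content of the lemma. As a sanity cross-check, the resulting isomorphism gives $|H^{\ann}| = |\widehat{G/H}| = |G/H| = |G|/|H|$, consistent with the subgroup correspondence of Lemma~\ref{Lemma 1.1}.
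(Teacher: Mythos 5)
The paper does not supply its own proof of this lemma; it is simply cited from Nathanson's book (Lemma~4.5 there). Your argument is the standard textbook proof — verify the image lands in $H^{\ann}$, check homomorphism, injectivity via surjectivity of $\pi$, and surjectivity by descending a character $\psi \in H^{\ann}$ to a well-defined character on $G/H$ — and it is correct in all details, including the well-definedness check at the heart of (iv). Your closing sanity check against Lemma~\ref{Lemma 1.1} is also consistent.
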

\subsection{Weighted graphs}\label{subsec:analysis on graph}
The primary goal of this subsection is to introduce weighted graphs and record an inequality involving their eigenvalues.  Standard references are \cite{LP18,LY10,KKRT16}.  Let $ \mathcal{G} = (V, E, w) $ be an undirected, weighted and connected graph.  Here $ V $ is the set of vertices,  $E$ is the set of edges and $\omega: V \times V \rightarrow \mathbb{R}_{\geq 0} $ satisfies $\omega(x,y) = \omega(y,x)$ for any $(x,y)\in V \times V$,  and $\omega(x,y) > 0$ if and only if $\{x,y\} \in E$.  The function $\omega$ is called the \emph{weight function} of $\mathcal{G}$.  Given $x,y\in V$,  we say that $x$ and $y$ are \emph{adjacent},  denoted as $x \sim y$,  if $\{x,y\} \in E$.  The \emph{degree of a vertex} $ x \in V $ is  $ d_x \coloneqq \sum_{y \in V,\; y \sim x} \omega(x,y) $.  We denote $d_G \coloneqq \max_{x \in V} d_x$.

Let $L(V,\mathbb{R})$ be the vector space consisting of all real-valued functions on $V$.   The \emph{Laplacian operator} of $\mathcal{G}$ is the linear map $\Delta: L(V,\mathbb{R}) \to L(V,\mathbb{R})$ defined by
\begin{equation}\label{eq:laplace}
\left( \Delta u \right) (x) = \sum_{\substack{y \in V \\ y \sim x}} \omega(x,y) (u(y) - u(x)),\quad u\in L(V,\mathbb{R}),\;  x \in G.
\end{equation}
We remark that $\Delta = A - D$ where $A \coloneqq (\omega(x,y))_{x,y\in V}\in \mathbb{R}^{|V| \times |V|}$ and $D \coloneqq \diag(d_x)_{x\in V} \in \mathbb{R}^{|V| \times |V|}$.

Moreover,  we define two bilinear operators on $L(V,\mathbb{R})$:
\begin{equation}\label{eq:Gamma}
\Gamma(u, v) = \frac{1}{2} \left( \Delta(u v) - u\Delta v - (\Delta u)v \right),  \quad 
\Gamma_2(u, v) = \frac{1}{2} \left( \Delta \Gamma(u, v) - \Gamma(u,  \Delta v) - \Gamma(\Delta u, v) \right),
\end{equation}
where $ u, v \in L(V,\mathbb{R})$.  For simplicity,  we denote $\Gamma(u) \coloneqq \Gamma(u,u)$ and $\Gamma_2(u) \coloneqq \Gamma_2(u,u)$.  We say that $\mathcal{G}$ is \emph{$\CD(0,\infty)$} if $\Gamma_2(u) \ge 0$ for all $u\in L(G,\mathbb{R})$. 

\begin{theorem}[\cite{LP18}, Theorem 1.2]
  \label{theorem 1.4}
Let $\mathcal{G} = (V,E,\omega)$ be an undirected,  weighted and connected graph.  Suppose that $\mathcal{G}$ is $\CD(0, \infty) $ and $0 = \lambda_1<  \lambda_2 \le  \cdots \le \lambda_n$ are eigenvalues of $-\Delta$.  Then for any natural number $ k \geq 2 $,  we have $\lambda_k  \leq C d_G k^2 \lambda_2$ where $C \coloneqq \left( \frac{20\sqrt{2}e}{e-1} \right)^2$.
\end{theorem}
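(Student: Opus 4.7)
The plan is to follow the Chung--Grigor'yan--Yau higher-eigenvalue strategy adapted to the discrete weighted-graph setting, with $\CD(0,\infty)$ playing the role of non-negative Ricci curvature. The programme has three stages: (i) convert $\Gamma_2 \ge 0$ into a semigroup gradient estimate, (ii) deduce Gaussian off-diagonal heat-kernel decay, and (iii) build $k$ almost orthogonal test functions from a spatial partition and plug them into the min-max principle.

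First, I would derive the Bakry--\'Emery semigroup gradient estimate
\[
\Gamma(P_t u) \le P_t\,\Gamma(u),\qquad P_t \coloneqq e^{t\Delta},
\]
by differentiating the interpolation $s\mapsto P_s\,\Gamma(P_{t-s}u)$ on $[0,t]$: using \eqref{eq:Gamma}, its derivative equals $2\,P_s\,\Gamma_2(P_{t-s}u)$, which is non-negative under $\CD(0,\infty)$, so the map is non-decreasing and comparing the endpoints yields the estimate. Second, combining this gradient estimate with Davies' exponential-perturbation trick produces an off-diagonal heat-kernel bound of the form $p_t(x,y)\le C_1\exp(-c_2\,\rho(x,y)^2/t)$, where $\rho$ is the combinatorial graph metric; the degree bound $d_G$ will eventually enter the final estimate through the spectral scale of $-\Delta$, whose operator norm is controlled by $2d_G$.

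The third and most substantial step is the test-function construction. Using the min-max characterisation
\[
\lambda_k = \min_{\dim U=k}\ \max_{0\neq u\in U}\ \frac{\langle -\Delta u, u\rangle}{\langle u, u\rangle},
\]
I would partition $V$ into $k$ subsets $A_1,\dots,A_k$ that are pairwise $\rho$-separated by a scale $r\asymp 1/\sqrt{d_G\,k^2\,\lambda_2}$, obtained by a greedy packing argument fed by the Poincar\'e inequality for $\lambda_2$. Setting $\varphi_i\coloneqq P_t\mathbf{1}_{A_i}$ with $t\asymp r^2/d_G$, the Gaussian bound renders the $\varphi_i$ pairwise near-orthogonal while the gradient estimate forces $\langle -\Delta\varphi_i,\varphi_i\rangle/\|\varphi_i\|^2\lesssim 1/t$. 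A Gram--Schmidt correction promotes near-orthogonality to genuine orthogonality without damaging the Rayleigh quotients, and the min-max principle delivers $\lambda_k\le C\,d_G\,k^2\,\lambda_2$.

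The main obstacle is the simultaneous calibration of the three parameters---the semigroup time $t$, the partition scale $r$, and the cluster count $k$---so that the heat-kernel decay dominates the $k$-fold cross terms while the Rayleigh quotients remain of order $1/t$. Tracking constants carefully through Davies' trick and the covering lemma is precisely what produces the explicit value $C=(20\sqrt{2}\,e/(e-1))^2$; since this numerical optimisation is the content of \cite[Theorem~1.2]{LP18}, I would invoke that reference for the sharp constant rather than re-deriving it from scratch.
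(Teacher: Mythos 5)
The paper supplies no proof of this theorem: it is quoted verbatim as Theorem~1.2 of Liu and Peyerimhoff \cite{LP18}, so there is no internal argument to compare your sketch against.

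Measured against the cited source, your outline is in the right spirit: the Bakry--\'Emery interpolation in step~(i) is exactly how $\CD(0,\infty)$ enters, and the Chung--Grigor'yan--Yau/Funano-style disjoint test-function construction in step~(iii) is what \cite{LP18} adapts. Two cautions are in order, though. First, on a graph the Laplacian is a bounded operator (here $\|\Delta\|\le 2d_G$), so the heat kernel does \emph{not} satisfy a genuine Gaussian off-diagonal bound $p_t(x,y)\le C_1 e^{-c_2\rho(x,y)^2/t}$ in the combinatorial metric: the correct Davies-type estimate for such jump processes has a Poisson tail, roughly $\exp\bigl(-c\,\rho\,\mathrm{arcsinh}(\rho/t)\bigr)$, and is naturally phrased in an intrinsic metric adapted to the edge weights rather than in $\rho$ itself. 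Liu and Peyerimhoff formulate their argument through gradient/Lipschitz estimates for $P_t$ precisely to sidestep this issue, so the ``Gaussian decay'' ingredient as you state it would fail. Second, and decisively for a self-contained proof, your closing step is to ``invoke that reference for the sharp constant.'' Since the explicit value $C=\left(\frac{20\sqrt{2}\,e}{e-1}\right)^2$ is exactly the content of the theorem being proved, that step is circular. As a high-level explanation of why the theorem is true, the sketch is serviceable; as a proof it is incomplete, and in any case there is no paper-internal proof for it to match---the paper simply cites.
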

\section{Positive definite functions}\label{sec:pd}
In this section,  we investigate real-valued functions whose Fourier coefficients are all non-negative.  These functions,  known as positive definite functions,  play a crucial role in harmonic analysis on groups \cite{Edwards79, Edwards82, Rudin90}.  To begin with, we summarize several basic properties of functions on finite abelian groups in the following lemma.
\begin{lemma}
\label{theorem 2.1}
Given a function $f:G\to \mathbb{C}$,  we denote by $H$ the subgroup of $\widehat{G}$ generated by $\supp(\widehat{f}) \coloneqq \{\chi \in \widehat{G}: \widehat{f} (\chi) \ne 0\}$. Then the followings hold:
  \begin{enumerate}[(a)]
    \item\label{theorem 2.1:item1} There is a well-defined function $\widetilde{f}:G/H^{\ann}\rightarrow \mathbb{C}$ such that $\widetilde{f} (x H^{\ann}) = f(x)$ for any $x \in G$.
 \item\label{theorem 2.1:item2}  Let $\pi : G\to G/H^\ann$ be the quotient map and let $ \pi^\ast:\widehat{G/H^{\ann}} \to \widehat{G}$ be the induced map  defined by $\pi^\ast (\rho) \coloneqq \rho \circ \pi$ is an isomorphism from $\widehat{G/H^{\ann}}$ to $H$.
 \item\label{theorem 2.1:item3} $\widetilde{f}=\sum_{\chi\in \supp{\widehat{f}}} \widehat{f}(\chi) (\pi^{*})^{-1}(\chi)$.
 \item \label{theorem 2.1:item4} Suppose that $f$ is real-valued.  For any $\chi \in \widehat{G}$,  we have $\widehat{f} (\chi^{-1}) = \overline{\widehat{f}(\chi)}$.  If moreover $\widehat{f}(\chi) \in \mathbb{R}$,  then $\widehat{f} (\chi^{-1}) = \widehat{f}(\chi)$.
  \end{enumerate}
\end{lemma}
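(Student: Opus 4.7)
The plan is to derive (a)--(c) from the Fourier expansion $f = \sum_{\chi \in \supp(\widehat{f})} \widehat{f}(\chi)\chi$ together with the annihilator duality of Lemma~\ref{Lemma 1.1} and the quotient identification of Lemma~\ref{Lemma 1.2}, and to obtain (d) by a direct calculation exploiting that characters take values on the unit circle. The conceptual content is that if $\widehat{f}$ is supported in a subgroup $H \le \widehat{G}$, then $f$ is constant on cosets of $H^{\ann}$ and therefore descends to $G/H^{\ann}$; once this is established, everything else is bookkeeping.

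For (a), I would fix $x \in G$ and $y \in H^{\ann}$ and verify directly that $f(xy) = f(x)$. Writing $f = \sum_{\chi\in \supp(\widehat{f})} \widehat{f}(\chi)\chi$ and using that $\supp(\widehat{f}) \subseteq H$, the definition of $H^{\ann}$ forces $\chi(y) = 1$ for each $\chi$ in the sum, so $\chi(xy) = \chi(x)$ termwise and $f(xy) = f(x)$. Hence $\widetilde{f}(xH^{\ann}) \coloneqq f(x)$ is well defined. For (b), I apply Lemma~\ref{Lemma 1.2} with the subgroup $H^{\ann} \le G$ in place of the generic $H$: it yields that $\pi^\ast : \widehat{G/H^{\ann}} \to (H^{\ann})^{\ann}$ is an isomorphism of groups. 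The biduality $(H^{\ann})^{\ann} = H$ (noted right after Lemma~\ref{Lemma 1.1}, under the canonical identification $\widehat{\widehat{G}} \cong G$) then identifies the target with $H$, giving exactly the claim.

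Part (c) will follow by using (b) to rewrite each $\chi \in \supp(\widehat{f}) \subseteq H$ as $\chi = \pi^\ast\bigl((\pi^\ast)^{-1}(\chi)\bigr) = (\pi^\ast)^{-1}(\chi)\circ \pi$, which gives $\widetilde{f}\circ \pi = \bigl(\sum_{\chi} \widehat{f}(\chi)(\pi^\ast)^{-1}(\chi)\bigr)\circ \pi$; surjectivity of $\pi$ then cancels it from both sides. For (d), I would expand $\widehat{f}(\chi^{-1}) = \frac{1}{|G|}\sum_x f(x)\overline{\chi^{-1}(x)}$ and use $\overline{\chi^{-1}(x)} = \chi(x)$ (since $|\chi(x)| = 1$) together with $f(x)\in \mathbb{R}$ to match $\overline{\widehat{f}(\chi)} = \frac{1}{|G|}\sum_x f(x)\chi(x)$; the supplementary clause when $\widehat{f}(\chi)\in \mathbb{R}$ is immediate. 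I do not anticipate a genuine obstacle; the only point requiring care is consistently tracking in which group the annihilator is being taken, so that the composition $(H^{\ann})^{\ann} = H$ is legitimate under the canonical identification $\widehat{\widehat{G}} \cong G$.
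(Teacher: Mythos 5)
Your proposal is correct and follows essentially the same route as the paper: (a) by Fourier expansion and the fact that $\chi(y)=1$ for $\chi\in\supp(\widehat f)\subseteq H$ and $y\in H^{\ann}$; (b) by Lemma~\ref{Lemma 1.2} together with biduality $(H^{\ann})^{\ann}=H$; (c) by pulling back through $\pi^\ast$ (your "compose with $\pi$ and cancel" is the same computation the paper carries out pointwise at $xH^{\ann}$); and (d) by direct expansion using $\overline{\chi(x)}=\chi(x)^{-1}$ and $f$ real-valued. No gaps.
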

\begin{proof}
To prove \eqref{theorem 2.1:item1},  it is sufficient to show $f(y)=f(x)$ for all $y \in x H^{\ann}$.  By definition,  there exists some $z \in H^{\ann}$ such that $y = x z$.  Since $H$ is generated by $\supp(\widehat{f})$,  $H^{\ann}=\{z \in G: \chi(z)=1, \; \chi \in \supp(\widehat{f})\}$.  This implies 
\[
f(y) =\sum_{\chi\in \supp(\widehat{f})}\widehat{f}(\chi)\chi(y) = \sum_{\chi\in \supp(\widehat{f})} \widehat{f}(\chi)\chi(x)\chi(z) = \sum_{\chi\in \supp(\widehat{f})}\widehat{f}(\chi)\chi(x) = f(x).
\] 
By Lemma \ref{Lemma 1.2},  \eqref{theorem 2.1:item2} is clear.  For \eqref{theorem 2.1:item3},  we notice that \eqref{theorem 2.1:item2} implies $\rho \coloneqq (\pi^{\ast})^{-1} (\chi) \in \widehat{G/H^\ann}$ for each $\chi \in \supp (\widehat{f}) \subseteq H \subseteq \widehat{G}$.  Moreover,  we have 
\[
\rho (x H^\ann) = \rho\circ \pi (x) = \pi^\ast (\rho) (x) = \chi (x),
\]
which leads to 
\[
\widetilde{f}(x H^\ann) = f(x) = \sum_{\chi \in \supp ( \widehat{f} )} \widehat{f}(\chi) \chi(x) = \sum_{\chi \in \supp ( \widehat{f} )} \widehat{f}(\chi) \left[ (\pi^\ast)^{-1}(\chi) \right] (x H^\ann).  
\]
By definition of $\widehat{f}(\chi)$ and the assumption that $f$ is real-valued,  it is straightforward to verify \eqref{theorem 2.1:item4}.
\end{proof}

For each $ f \in L(G) $,  we define 
\begin{equation}\label{eq:Rf}
R_f:  L(\widehat{G})\setminus \{0\} \to \mathbb{R},\quad R_f(v) =  \frac{|G| \langle \widehat{f} * v, v \rangle}{\langle v, v \rangle}.  
\end{equation}
\begin{lemma}
  \label{lemma 2.1} 
For each real-valued function on a finite abelian group $G$,  we have
$\max_{x \in G} f(x) = \max_{v \in L(\widehat{G}) \setminus \{0\}} R_f(v)$.
\end{lemma}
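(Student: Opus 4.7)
The plan is to use Fourier analysis on $\widehat{G}$ (where $\widehat{\widehat{G}}\cong G$ by Pontryagin duality) to recast $R_f(v)$ as a manifestly real weighted average of the values of $f$, from which both the upper bound and the attainment follow immediately.

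First, I apply the convolution-product identity $\widehat{(a*b)}=\widehat{a}\widehat{b}$ to $a=\widehat{f}$ and $b=v\in L(\widehat{G})$, together with the inversion formula $\widehat{\widehat{f}}(x)=|G|^{-1}f(x^{-1})$ supplied earlier in the excerpt, to obtain
\[
\widehat{\widehat{f}*v}(x) \;=\; \widehat{\widehat{f}}(x)\,\widehat{v}(x) \;=\; \frac{f(x^{-1})}{|G|}\,\widehat{v}(x),\qquad x\in G.
\]
Then I apply Parseval on $\widehat{G}$, i.e.\ $\langle a,b\rangle_{L(\widehat{G})}=|G|\,\langle\widehat{a},\widehat{b}\rangle_{L(G)}$, to both the numerator and denominator of $R_f(v)$. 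After the $|G|$-factors cancel this should yield the key identity
\[
R_f(v) \;=\; \frac{\sum_{x\in G} f(x^{-1})\,|\widehat{v}(x)|^{2}}{\sum_{x\in G} |\widehat{v}(x)|^{2}}.
\]

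With this identity in hand the lemma is essentially immediate. The right-hand side is a convex combination of the real numbers $\{f(x^{-1}):x\in G\}$ with non-negative weights $|\widehat{v}(x)|^{2}$ (not all zero, since $v\neq 0$ forces $\widehat{v}\neq 0$), hence $R_f(v)\le \max_{x\in G} f(x^{-1}) = \max_{x\in G} f(x)$, using that $x\mapsto x^{-1}$ is a bijection on $G$. For attainment, pick $y^{*}\in G$ with $f(y^{*})=\max_{x\in G} f(x)$ and take $v\in L(\widehat{G})$ to be the evaluation character defined by $v(\chi)=\chi((y^{*})^{-1})$. Orthogonality of characters of $\widehat{G}$ then yields $\widehat{v}(x)=\delta_{x,(y^{*})^{-1}}$, so the weighted average collapses to $f(y^{*})$, matching the upper bound.

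No step is conceptually hard; the only real care required is bookkeeping of the normalization constants and keeping the distinction between $G$, $\widehat{G}$, and $\widehat{\widehat{G}}\cong G$ straight when transferring Parseval and the convolution identity from $L(G)$ to $L(\widehat{G})$.
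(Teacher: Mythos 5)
Your proof is correct and follows essentially the same route as the paper: both derive the identity $R_f(v)=\bigl(\sum_{x\in G}f(x^{-1})|\widehat{v}(x)|^2\bigr)/\bigl(\sum_{x\in G}|\widehat{v}(x)|^2\bigr)$ via the convolution theorem and Parseval, and both take $v$ with $\widehat{v}=\delta_{(y^*)^{-1}}$ (your $v(\chi)=\chi((y^*)^{-1})$ is exactly the paper's $|G|\widehat{\delta_{y^*}}$). The only cosmetic difference is that you explicitly spell out the convex-combination argument for the upper bound, which the paper leaves implicit in its chain of equalities.
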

\begin{proof}
Suppose $\max_{x\in G} f(x) = f(x_0)$ for some $x_0 \in G$.  Denote $v \coloneqq |G| \widehat{\delta_{x_0}}$ where $\delta_{x_0}$ is the indicator function of $x_0$ defined by
\[
\delta_{x_0}(x) = 
\begin{cases}
1 \quad &\text{if $x = x_0$}, \\
0 \quad &\text{otherwise}.
\end{cases}
\]
Clearly,  we have $\widehat{v} = \delta_{x^{-1}_0}$. Since the Fourier transform is an isometry, we obtain 
  \[
  \frac{\langle \widehat{f} * v, v \rangle}{\langle v, v \rangle} = \frac{ \left\langle \widehat{\left( \widehat{f} * v \right)}, \widehat{v} \right\rangle}{\langle \widehat{v}, \widehat{v} \rangle} = \frac{\sum_{x \in G} f(x^{-1}) |\widehat{v}(x)|^2}{|G|\sum_{x \in G} |\widehat{v}(x)|^2} = \frac{f(x_0)}{ |G| } . \qedhere 
\]
\end{proof}
In the following,  we denote values of $f: G\to \mathbb{R}$ in the non-increasing order as
\[
\nu_1(f) \geq \nu_2(f) \ge  \cdots \geq \nu_n(f). \]
\begin{lemma}
  \label{theorem 2.2}
Let $ f $ be a real-valued function on a finite abelian group $ G $.  Given $v \in L(\widehat{G}) \setminus \{0\}$ such that $v(\chi) \ge 0$ for all $\chi \in \widehat{G}$,  we denote 
\[
\mu \coloneqq |\supp(v)|,\quad M \coloneqq \frac{|G| R_f(v) - \mu \sum_{i=1}^{k} \nu_i(f)}{|G| - \mu k}.
\]
If $ k \mu < |G| $ (resp.  $ k \mu \geq |G| $),  then $\nu_{k+1}(f) \geq M$ (resp.  $\nu_{k+1}(f) \le M$).
\end{lemma}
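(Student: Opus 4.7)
The plan is to recognize $R_f(v)$ as a weighted average of the values of $f$ whose weights are uniformly bounded by $\mu/|G|$, and then invoke a standard linear-programming-type extremal argument. The reason to expect such a rewriting is that convolution on the abelian group $\widehat{G}$ is diagonalized by its own characters, i.e.\ by the evaluation functionals $e_x(\chi)=\chi(x)$ for $x\in G$.

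\textbf{Step 1 (diagonalizing $R_f$).} I would apply the Fourier theorem on $\widehat{G}$ to the convolution $\widehat{f}*v$: writing $\widehat{v}$ for the Fourier transform of $v\in L(\widehat{G})$, one has $\widehat{\widehat{f}*v}(x)=\widehat{\widehat{f}}(x)\,\widehat{v}(x)=\frac{1}{|G|}f(x^{-1})\widehat{v}(x)$ by parts (a) and (c) of the Fourier theorem. Parseval then gives $|G|\langle\widehat{f}*v,v\rangle=\sum_{x\in G}f(x^{-1})|\widehat{v}(x)|^2$ and $\langle v,v\rangle=\sum_x|\widehat{v}(x)|^2$, so after the substitution $y=x^{-1}$ and setting $w_y\coloneqq|\widehat{v}(y^{-1})|^2$,
\[
R_f(v)=\frac{\sum_{y\in G}f(y)\,w_y}{\sum_{y\in G}w_y}.
\]

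\textbf{Step 2 (bounding the weights).} I would now use the hypothesis $v\geq 0$ together with Cauchy--Schwarz on the support of $v$: pointwise nonnegativity lets me drop the absolute value inside the sum, giving
\[
|\widehat{v}(y)|\leq\frac{1}{|G|}\sum_{\chi\in\supp(v)}v(\chi)\leq\frac{1}{|G|}\sqrt{\mu\sum_{\chi}v(\chi)^2}=\sqrt{\frac{\mu\,\langle v,v\rangle}{|G|}},
\]
using $\sum_\chi|v(\chi)|^2=|G|\langle v,v\rangle$ from the normalization of the inner product. Consequently $w_y\leq\frac{\mu}{|G|}\sum_x w_x$ for every $y\in G$. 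Normalizing $p_y\coloneqq w_y/\sum_x w_x$ produces a probability distribution on $G$ with $0\leq p_y\leq\mu/|G|$ and $R_f(v)=\sum_y f(y)p_y$.

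\textbf{Step 3 (LP-type bound).} Order the indices so that $f(y_i)=\nu_i(f)$ and compute
\[
R_f(v)-\nu_{k+1}(f)=\sum_{i=1}^{n}p_{y_i}\bigl(\nu_i(f)-\nu_{k+1}(f)\bigr).
\]
The terms with $i\leq k$ are nonnegative and bounded by $\frac{\mu}{|G|}(\nu_i(f)-\nu_{k+1}(f))$, the $i=k+1$ term vanishes, and the remaining terms are nonpositive. Summing yields
\[
|G|R_f(v)-\mu\sum_{i=1}^{k}\nu_i(f)\leq(|G|-k\mu)\nu_{k+1}(f),
\]
which holds \emph{unconditionally}. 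Dividing by $|G|-k\mu$ then produces $\nu_{k+1}(f)\geq M$ when $k\mu<|G|$ and flips to $\nu_{k+1}(f)\leq M$ when $k\mu>|G|$, settling both cases simultaneously.

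The main obstacle I anticipate is the clean bookkeeping in Step 1: correctly tracking the $|G|^{-1}$ factor arising from $\widehat{\widehat{f\,}}(x)=|G|^{-1}f(x^{-1})$ and the inversion $x\mapsto x^{-1}$ so that the constant $|G|$ in the definition of $R_f(v)$ neatly cancels against it, producing an honest probability distribution rather than one with a stray factor. Once that identification is in place, the positivity hypothesis on $v$ enters in exactly one line (Cauchy--Schwarz on $\supp(v)$), and the rest is a standard tight extremization of a linear functional on the box $[0,\mu/|G|]^{|G|}\cap\{\sum p_y=1\}$.
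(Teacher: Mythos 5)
Your proof is correct, but it follows a genuinely different organization from the paper's. The paper proceeds by a ``level truncation'' trick: it constructs an auxiliary function $f_1$ that agrees with $f$ except at the top-$k$ argmaxima, where its value is capped at $\nu_{k+1}(f)$; it then invokes Lemma~\ref{lemma 2.1} to get $\nu_{k+1}(f)=\nu_1(f_1)\ge R_{f_1}(v)$, expands $R_{f_1}(v)=R_f(v)-\langle g*v,v\rangle/\langle v,v\rangle$ via the perturbation $g$ of the Fourier coefficients, and finishes by bounding $|\langle g*v,v\rangle|/\langle v,v\rangle$ through $\langle v,\delta_{\supp(v)}\rangle^2/\langle v,v\rangle\le \mu/|G|$ (Cauchy--Schwarz). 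You instead bypass both the auxiliary function and Lemma~\ref{lemma 2.1}: you diagonalize $R_f(v)$ directly as a convex combination $\sum_y p_y f(y)$ with $p_y = |\widehat v(y^{-1})|^2/\langle v,v\rangle$, show $p_y\le\mu/|G|$ from nonnegativity of $v$ plus Cauchy--Schwarz on $\supp(v)$, and conclude by an explicit majorization argument on the box $\{0\le p_y\le\mu/|G|,\ \sum p_y=1\}$. The two arguments deploy the same two tools---the Fourier isometry identity $\widehat{\widehat f*v}(x)=|G|^{-1}f(x^{-1})\widehat v(x)$ and Cauchy--Schwarz over $\supp(v)$---but your version makes the extremal structure of the bound transparent (the inequality is tight when the weight is uniformly $\mu/|G|$ on the top $k$ points and the leftover sits at level $\nu_{k+1}$), and it shows that Lemma~\ref{lemma 2.1} is just the $k=0$ specialization of the same computation rather than a separate prerequisite. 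The paper's version is more modular, as the $f_1$ construction is reusable, but yours is self-contained and arguably cleaner. One edge case worth noting (shared by both proofs): the formula for $M$ is undefined when $k\mu=|G|$, so the ``resp.'' clause should strictly read $k\mu>|G|$; this does not affect the argument.
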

\begin{proof}
For each $ 1 \le i \le k $,  we choose $ x_i \in G $ such that $\nu_i(f) =  f(x_i) $.  Denote $ S \coloneqq \{x_1, \dots, x_k\} $.  We consider the function $f_1: G \to \mathbb{R}$ defined by 
    \[
    f_1(x) =
    \begin{cases}
        \nu_{k+1}(f), & \text{if } x \in S, \\
        f(x), & \text{otherwise}.
    \end{cases}
    \]
By definition,  we have $\nu_1(f_1) = \nu_{k+1}(f)$.  For each $\chi \in \widehat{G}$,  we have 
\[
    \widehat{f_1}(\chi) = \frac{1}{|G|}\sum_{x \in S} \nu_{k+1}(f) \overline{\chi(x)} + \frac{1}{|G|} \sum_{x \in G \setminus S} f(x) \overline{\chi(x)} = \widehat{f}(\chi) - \frac{1}{|G|} g(\chi),
    \]
where $ g : \widehat{G} \to \mathbb{C}$ is defined by 
    \[
    g(\chi) = \sum_{i=1}^{k} (\nu_i(f) - \nu_{k+1}(f)) \overline{\chi(x_i)}.
    \]   
Since $f_1$ is real-valued, Lemma~\ref{lemma 2.1} implies
\begin{equation}\label{theorem 2.2:eq1}
\nu_{k+1}(f) = \nu_1(f_1) \geq R_{f_1}(v) = R_f(v) - \frac{\langle g*v, v \rangle}{\langle v, v \rangle}.
\end{equation}
By assumption,  $ v$ is a non-negative function on $\widehat{G}$,  which leads to 
\begin{align}
|g * v(\chi)| = \left| \frac{1}{|G|} \sum_{\rho \in \widehat{G}} g(\rho^{-1}) v(\chi \rho) \right| &= \left|\frac{1}{|G|} \sum_{\rho \in \widehat{G}} \sum_{i=1}^{k} (\nu_i(f) - \nu_{k+1}(f)) \overline{\rho^{-1}(x_i)} v(\chi \rho) \right|  \label{theorem 2.2:eq2} \\
                  &\leq \frac{1}{|G|} \sum_{\rho \in \widehat{G}} \sum_{i=1}^{k} (\nu_i(f) - \nu_{k+1}(f)) v(\chi \rho) \nonumber \\ 
                  &= \left( \sum_{i=1}^{k} (\nu_i(f) - \nu_{k+1}(f)) \right) \langle v, 1 \rangle.  \nonumber
\end{align}
We notice that $\langle v, 1 \rangle = \langle v,  \delta_{\supp(v)} \rangle$ where $\delta_{\supp(v)} : \widehat{G} \to \mathbb{R}$ is the indicator function of $\supp(v)$ defined by 
\[
\delta_{\supp(v)}(\chi) = \begin{cases}
1  &\text{~if~} \chi \in \supp(v), \\
0 &\text{~otherwise}.
\end{cases}
\]
Combining \eqref{theorem 2.2:eq1} with \eqref{theorem 2.2:eq2},  we obtain
    \[
    \begin{aligned}
      \nu_{k+1}(f) \geq& R_f(v) - \left( \sum_{i=1}^{k} (\nu_i(f) - \nu_{k+1}(f)) \right) \frac{|\langle v,  \delta_{\supp(v)} \rangle|^2}{\langle v, v \rangle}\\
                       \geq&R_f(v) - \left( \sum_{i=1}^{k} (\nu_i(f) - \nu_{k+1}(f)) \right)\left< \delta_{\supp(v)} , \delta_{\supp(v)} \right>\\
                       =&R_f(v) - \frac{\mu}{|G|} \left( \sum_{i=1}^{k} (\nu_i(f) - \nu_{k+1}(f)) \right).
    \end{aligned}
    \]
    Therefore, we may conclude that
    \[
    \left( 1 - \frac{k \mu}{|G|} \right) \nu_{k+1}(f) \geq R_f(v) - \frac{\mu}{|G|} \sum_{i=1}^{k} \nu_i(f)
    \]
and the desired inequality follows immediately.
\end{proof}

\begin{definition}[Positive definite function]
A function $f$ on $G$ is positive definite,  denoted as $f \succeq 0$,  if $\widehat{f}(\chi) \ge 0$ for any $ \chi \in \widehat{G} $.  
\end{definition}
In the literature,  positive definite functions are also called spectrally positive functions \cite{KM23}.  Given $f: G \to \mathbb{C}$,  we consider the linear operator
\begin{equation}\label{eq:AfG} 
M_f^G: L(\widehat{G}) \to L(\widehat{G}),\quad  M_f^G (v) = |G| \widehat{f} * v.
\end{equation}
In the context of linear algebra,  $R_f(v)$ defined in \eqref{eq:Rf} is the \emph{Rayleigh quotient} of $v$ with respect to the linear operator $M^G_f$.  According to the celebrated Bochner's Theorem \cite{Bochner33,Weil41,Rudin90},  $f \succeq 0$ if and only if for any $M_{\widehat{f}}^{\widehat{G}}$ is a Hermitian positive semidefinite linear operator.

Since $f(x) = \sum_{\chi \in \widehat{G}} \widehat{f}(\chi) \chi (x)$ and $|\chi(x)| = 1$,  we observe that for any real-valued positive definite function $f$,  it holds that $\nu_1(f) \coloneqq \max_{x \in G} f(x) = f(1) = \sum_{\chi \in \widehat{G}} \widehat{f}(\chi)$.  We denote $\argmax_{x\in G} f \coloneqq \{x \in G:  f(x)= \nu_1(f)\}$ and observe that $|\argmax_{x\in G} f|$ is determined by the support of $\widehat{f}$.  More precisely,  we have:
\begin{proposition}[Multiplicity of $\nu_1(f)$]
  \label{cor2.1}
Let $f: G \to \mathbb{R}$ be a real-valued positive definite function and let $H$ be the subgroup of $\widehat{G}$ generated by $\supp(\widehat{f})$.  Then $\nu_1(f)=\cdots =\nu_k(f)$ if and only if $|\widehat{G}/H|\ge k$.
\end{proposition}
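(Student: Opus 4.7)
The plan is to identify $\argmax_{x \in G} f$ with the annihilator $H^{\ann}$ and then use Pontryagin duality to turn the cardinality into $|\widehat{G}/H|$.

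First, I would observe that for any real-valued positive definite $f$, the Fourier expansion $f(x) = \sum_{\chi \in \supp(\widehat{f})} \widehat{f}(\chi) \chi(x)$ together with $\widehat{f}(\chi) \ge 0$ gives the pointwise bound
\[
f(x) = \operatorname{Re} f(x) \le \sum_{\chi \in \supp(\widehat{f})} \widehat{f}(\chi) |\chi(x)| = \sum_{\chi \in \supp(\widehat{f})} \widehat{f}(\chi) = f(1) = \nu_1(f).
\]
Equality forces $\chi(x) = 1$ for every $\chi \in \supp(\widehat{f})$, and since this condition is closed under products and inverses it is equivalent to $\chi(x) = 1$ for every $\chi \in H$. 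Therefore
\[
\argmax_{x\in G} f = H^{\ann}.
\]

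Next I would compute $|H^{\ann}|$ using the duality already recorded in the preliminaries. Applying Lemma~\ref{Lemma 1.2} to the subgroup $H^{\ann} \subseteq G$, the map $\pi^\ast : \widehat{G/H^{\ann}} \to (H^{\ann})^{\ann} = H$ (the last equality by Lemma~\ref{Lemma 1.1} and $(H^{\ann})^{\ann}=H$) is an isomorphism. Hence
\[
|H^{\ann}| = \frac{|G|}{|G/H^{\ann}|} = \frac{|\widehat{G}|}{|\widehat{G/H^{\ann}}|} = \frac{|\widehat{G}|}{|H|} = |\widehat{G}/H|.
\]

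Combining the two, $|\argmax_{x\in G} f| = |\widehat{G}/H|$. Since $\nu_1(f) = \cdots = \nu_k(f)$ is by definition equivalent to $|\argmax_{x\in G} f| \ge k$, the proposition follows.

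There is no real obstacle here; the only subtlety is to be careful that "equality in $f(x) \le f(1)$" actually forces $\chi(x)=1$ for every $\chi \in \supp(\widehat{f})$, which uses that the $\widehat{f}(\chi)$ are nonnegative reals so that the triangle inequality is saturated only when all unit complex numbers $\chi(x)$ (for $\chi$ in the support) align, and that this equality condition automatically extends from $\supp(\widehat{f})$ to the subgroup $H$ it generates.
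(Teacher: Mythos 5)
Your proposal is correct and follows the paper's own proof: identify $\argmax_{x\in G} f$ with $H^{\ann}$ and then use annihilator duality to convert $|H^{\ann}|$ into $|\widehat{G}/H|$. You fill in the triangle-inequality argument showing $\argmax f = H^\ann$ (which the paper states without proof) and you route the cardinality count through Lemma~\ref{Lemma 1.2} rather than citing Lemma~\ref{Lemma 1.1} directly, but these are presentational differences, not a different method.
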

\begin{proof}
Since $ f \succeq 0$,  $\max_{x \in G} f(x) = \sum_{\chi \in \supp(\widehat{f})} \widehat{f}(\chi)$.  We have $H^\ann = \argmax_{x \in G} f$.  Therefore,  we conclude that $\nu_1(f)=\cdots =\nu_k(f)$ if and only if $|H^{\ann}|\ge k$.  According to Lemma \ref{Lemma 1.1},  we obtain $|H^{\ann}| = |\widehat{G}/H|$ and this completes the proof.
\end{proof}
We have the following observation which is a direct consequence of Proposition~\ref{cor2.1}.
\begin{corollary}\label{cor:multiplicity of nu1}
If $f\succeq 0$ and $\supp(\widehat{f})$ does not generate $\widehat{G}$ then we must have $\nu_2(f) = \nu_1(f)$.
\end{corollary}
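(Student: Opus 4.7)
The plan is to derive this immediately from Proposition~\ref{cor2.1}. Let $H$ denote the subgroup of $\widehat{G}$ generated by $\supp(\widehat{f})$. The hypothesis that $\supp(\widehat{f})$ does not generate $\widehat{G}$ means precisely that $H$ is a proper subgroup of $\widehat{G}$, hence $|\widehat{G}/H| \ge 2$.

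Applying Proposition~\ref{cor2.1} with $k=2$, the inequality $|\widehat{G}/H| \ge 2$ is equivalent to $\nu_1(f) = \nu_2(f)$, which is exactly the desired conclusion. No further work is needed; the only verification is the translation between ``$\supp(\widehat{f})$ does not generate $\widehat{G}$'' and ``$H \subsetneq \widehat{G}$,'' which is immediate from the definition of $H$ as the subgroup generated by that support.

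There is no genuine obstacle here: the corollary is essentially a restatement of the $k=2$ case of Proposition~\ref{cor2.1}, and the only substantive content (that $H^\ann = \argmax_{x \in G} f$ together with $|H^\ann| = |\widehat{G}/H|$ via Lemma~\ref{Lemma 1.1}) has already been established in the proof of the proposition.
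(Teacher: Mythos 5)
Your proof is correct and is precisely the argument the paper intends: the corollary is stated as a direct consequence of Proposition~\ref{cor2.1}, and applying it with $k=2$ together with the observation that $H \subsetneq \widehat{G}$ implies $|\widehat{G}/H| \ge 2$ is exactly the paper's route.
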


Applying Lemma~\ref{theorem 2.2} to a real-valued positive definite function with $k = 1$,  we obtain the proposition that follows.
\begin{proposition}
\label{cor 2.2}
Let $ f: G \to \mathbb{R}$ be a real-valued positive definite function and let $v \in L(\widehat{G}) \setminus \{0\} $ be a non-negative function.  If $\mu \coloneqq |\supp(v)| < |G|$ then  
  \[
  \nu_{2}(f) \geq \frac{|G| R_f(v) - \mu f(1)}{|G| - \mu }.
  \]
\end{proposition}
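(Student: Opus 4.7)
The plan is to recognize that Proposition~\ref{cor 2.2} is essentially the $k=1$ specialization of Lemma~\ref{theorem 2.2}, combined with the identity $\nu_1(f)=f(1)$ which holds for any real-valued positive definite function.

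Concretely, I would proceed as follows. First, I would verify that the hypotheses of Lemma~\ref{theorem 2.2} are satisfied at $k=1$: the function $v$ is non-negative on $\widehat{G}$, and the condition $k\mu < |G|$ reduces to $\mu < |G|$, which is precisely the standing assumption of Proposition~\ref{cor 2.2}. Applying the first conclusion of Lemma~\ref{theorem 2.2} (the ``$k\mu<|G|$'' case) then yields
\[
\nu_2(f) \;\geq\; \frac{|G|\, R_f(v) - \mu\, \nu_1(f)}{|G| - \mu}.
\]

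Second, I would invoke the observation made in the paragraph preceding Proposition~\ref{cor2.1}: for a real-valued positive definite function $f$, the expansion $f(x)=\sum_{\chi\in\widehat{G}} \widehat{f}(\chi)\chi(x)$ together with $\widehat{f}(\chi)\ge 0$ and $|\chi(x)|=1$ forces $\nu_1(f)=f(1)=\sum_{\chi\in\widehat{G}}\widehat{f}(\chi)$. Substituting $\nu_1(f)=f(1)$ into the inequality above gives exactly the stated bound.

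There is essentially no main obstacle here: the proposition is a direct corollary, and the only two ingredients are (i) the specialization $k=1$ of Lemma~\ref{theorem 2.2}, whose proof has already been carried out in full, and (ii) the elementary identity $\nu_1(f)=f(1)$ for positive definite $f$. The proof can therefore be kept to two or three lines.
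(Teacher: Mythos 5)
Your proposal is correct and is exactly the paper's own argument: the paper introduces Proposition~\ref{cor 2.2} with the single sentence ``Applying Lemma~\ref{theorem 2.2} to a real-valued positive definite function with $k=1$, we obtain the proposition that follows,'' relying implicitly on the identity $\nu_1(f)=f(1)$ stated just above Proposition~\ref{cor2.1}. You have simply made those two steps explicit.
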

\section{A lower bound of $\nu_2(f)$}\label{sec:nu2}
In this section,  we investigate lower bounds of $\nu_2(f)$ for $f \succeq 0$. By Proposition~\ref{cor 2.2},  any non-negative function $v$ on $\widehat{G}$ with $|\supp(v)| < |G|$ provides a lower bound of $\nu_2(f)$ as long as we can bound $R_f(v)$ from below.  The ultimate goal of this section is to construct such $v$. 
\subsection{A generalized covering argument}\label{subsec:covering} We first present a framework that enables one to derive a lower bound of $R_f(v)$,  and consequently for $\nu_2(f)$.  This framework generalizes the existing covering argument within the context of graph theory \cite{Friedman93,joel,FT05}.  It is important to note that our approach is grounded in Fourier analysis, which sets it apart from the previous graph-theoretic method. 

Let $G_1$ be an abelian group which is not necessarily finite.  We denote by $L_0(G_1)$ the space of finitely supported complex-valued functions on $G$.  Suppose that there is a group homomorphism $\eta: G_1\to  \widehat{G}$.  This induces a homomorphism 
\begin{equation}\label{eq:eta*}
\eta_{\ast}: L_0(G_1) \to L(\widehat{G}),\quad    \eta_{{*}}(h)(\chi) =  \begin{cases}
      \sum_{z \in \eta^{-1}(\chi)} h(z) & \text{if } \eta^{-1}(\chi)\neq \emptyset, \\
      0 & \text{otherwise}.
    \end{cases}
\end{equation}
Since $\supp(h)$ is finite,  the summation in \eqref{eq:eta*} is finite and $\eta_{\ast}$ is well-defined.  

Furthermore,  we assume that there is a subset $S$ of $G_1$ such that $\eta(S)= \supp(\widehat{f})$ and $|S|=|\supp(\widehat{f})|$.  In particular,  $\eta|_{S}$ is a bijection between $S$ and $\supp(\widehat{f})$. Thus,  there is a well-defined homomorphism
\begin{equation}\label{eq:AfG1}
M_f^{G_1}:L_0(G_1)\to L_0(G_1),\quad 
\left[ M_f^{G_1}(h) \right] (z) =  \sum_{w \in S}\widehat{f}( \eta(w)^{-1} ) h(w z).
\end{equation}
If $G_1 = \widehat{G}$ and $\eta$ is the identity map,  then $M_f^{G_1}$ defined by \eqref{eq:AfG1} coincides with $M_f^G$ in \eqref{eq:AfG}.
\begin{lemma}
\label{lemma 2.2}
Let $G$ be a finite abelian group and let $f \in L(G)$.  Suppose that $G_1$ is an abelian group and $\eta: G_1 \to \widehat{G}$ is a group homomorphism such that $\eta(S) = \supp(\widehat{f})$ for some $S \subseteq G_1$ of cardinality $|S| = |\supp (\widehat{f})|$ .  Then the following diagram commutes.  
  \[
  \begin{tikzcd}
  L_0(G_1)\arrow[r,"M^{G_1}_f"] \arrow[d,"\eta_{\ast}" '] & L_0(G_1) \arrow[d,"\eta_{\ast}"] \\
  L(\widehat{G}) \arrow[r,"M^{G}_{f}"'] & L(\widehat{G})
  \end{tikzcd}
  \]
Here $M^{G}_{f}$,  $\eta_{\ast}$ and $M_f^{G_1}$ are respectively defined by \eqref{eq:AfG},  \eqref{eq:eta*} and \eqref{eq:AfG1}.  
\end{lemma}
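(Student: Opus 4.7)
The claim is an identity of linear maps $L_0(G_1) \to L(\widehat{G})$, so the plan is to fix arbitrary $h \in L_0(G_1)$ and $\chi \in \widehat{G}$ and verify pointwise that $\eta_\ast(M_f^{G_1}(h))(\chi) = M_f^G(\eta_\ast(h))(\chi)$. All sums involved are finite because $h$ has finite support, so rearrangements of summation order are harmless and the whole argument is pure bookkeeping.

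For the left-hand side I would substitute the definitions of $\eta_\ast$ and $M_f^{G_1}$, interchange the two sums so that $w \in S$ is outside and $z \in \eta^{-1}(\chi)$ is inside, and then replace $z$ by $y = wz$. Because $G_1$ is abelian and $\eta$ is a homomorphism, the condition $\eta(z) = \chi$ becomes $\eta(y) = \chi\eta(w)$, so the inner sum collapses to $\eta_\ast(h)(\chi\eta(w))$ and the whole expression becomes $\sum_{w \in S} \widehat{f}(\eta(w)^{-1}) \, \eta_\ast(h)(\chi\eta(w))$. For the right-hand side I would expand the convolution on $\widehat{G}$ via $M_f^G(\eta_\ast(h))(\chi) = \sum_{\rho \in \widehat{G}} \widehat{f}(\chi\rho^{-1}) \eta_\ast(h)(\rho)$, use that $\widehat{f}$ vanishes off $\supp(\widehat{f}) = \eta(S)$, and apply the bijection $\eta|_S : S \to \supp(\widehat{f})$ (available from the cardinality hypothesis) to reindex via $\chi\rho^{-1} = \eta(w)$, arriving at $\sum_{w \in S} \widehat{f}(\eta(w)) \, \eta_\ast(h)(\chi\eta(w)^{-1})$.

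The final step is matching these two expressions. The natural move is to invoke the involution $w \mapsto w^{\ast}$ on $S$ defined by $\eta(w^{\ast}) = \eta(w)^{-1}$, under which $\widehat{f}(\eta(w)) \, \eta_\ast(h)(\chi\eta(w)^{-1})$ transforms exactly into $\widehat{f}(\eta(w^{\ast})^{-1}) \, \eta_\ast(h)(\chi\eta(w^{\ast}))$, completing the identification. The main subtle point — and the only real obstacle — is that this involution exists only when $\supp(\widehat{f})$ is closed under inversion. In the context of Section~\ref{sec:pd} this is automatic: $f$ is real-valued, so the identity $\widehat{f}(\chi^{-1}) = \overline{\widehat{f}(\chi)}$ from Lemma~\ref{theorem 2.1} forces $\supp(\widehat{f}) = \supp(\widehat{f})^{-1}$, and the bijection $\eta|_S$ then restricts to a well-defined involution of $S$. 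Granting this, the two sides agree termwise and the diagram commutes.
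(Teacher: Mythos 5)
Your argument is the paper's own proof: evaluate both composites pointwise at $(h,\chi)$, interchange the finite sums, and reindex through the bijection $\eta|_S : S \to \supp(\widehat{f})$. The hidden symmetry requirement you identify is a genuine subtlety that the paper elides: unconditionally $M^G_f(v)(\chi) = \sum_{\tau \in \supp(\widehat{f})} \widehat{f}(\tau)\, v(\tau^{-1}\chi)$, whereas $(\eta_\ast \circ M^{G_1}_f)(h)(\chi) = \sum_{\rho \in \supp(\widehat{f})} \widehat{f}(\rho^{-1})\, v(\rho\chi)$ with $v = \eta_\ast(h)$, and these coincide precisely when $\chi \mapsto \chi^{-1}$ permutes $\supp(\widehat{f})$ — the paper's step ``$[M^G_f(v)](\chi) = \sum_{\rho\in\supp(\widehat{f})}\widehat{f}(\rho^{-1})v(\rho\chi)$'' silently assumes this, and the lemma as stated for arbitrary $f \in L(G)$ can actually fail without it (take $\supp(\widehat{f})$ a single non-involutory character, so that $M^{G_1}_f = 0$ while $M^G_f \neq 0$). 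Since every invocation of the lemma has $f$ with real Fourier coefficients, $\supp(\widehat{f})^{-1} = \supp(\widehat{f})$ holds by Lemma~\ref{theorem 2.1}\eqref{theorem 2.1:item4}, and your proof is correct in the setting where the lemma is actually used.
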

\begin{proof}
For each $h\in L_0(G_1)$ and $\chi \in \widehat{G}$,  we denote $v \coloneqq \eta_{\ast}(h)$ and $v_1 \coloneqq M^{G_1}_{f}(h)$.  On the one hand,  we have
\[
\left[ (M^{G}_{f} \circ\eta_{*} ) (h) \right] (\chi)
= \left[ M^{G}_f(v) \right] (\chi)
=\sum_{\rho \in \supp(\widehat{f})}\widehat{f}(\rho^{-1})v(\rho\chi)
=\sum_{\rho\in \supp(\widehat{f})}\sum_{z \in\eta^{-1}(\rho \chi)}\widehat{f}(\rho^{-1})h(z).
\]
On the other hand,  we have $\left[(\eta_{*}\circ M^{G_1}_f)(h)\right](\chi) = \left[ \eta_{\ast}(v_1) \right](\chi) =\sum_{z \in\eta^{-1}(\chi)}v_1(z)$ and 
\begin{align*}
\sum_{z \in\eta^{-1}(\chi)}v_1(z)
                                     &=\sum_{z \in\eta^{-1}(\chi)}\sum_{w \in S}\widehat{f}(\eta(w)^{-1})h(w z) \\
                                     & =\sum_{w \in S}\sum_{z\in \eta^{-1}(\chi)}\widehat{f}(\eta(w)^{-1})h(w z) \\
                                     &=\sum_{w \in S}\sum_{z \in\eta^{-1}(\eta(w)\chi)}\widehat{f}(\eta(w)^{-1})h(z)\\
                                     &=\sum_{\rho \in \supp(\widehat{f})}\sum_{z\in\eta^{-1}(\rho\chi)}\widehat{f}(\rho^{-1}) h(z).  \qedhere
\end{align*}
\end{proof}

Let $R_f:  L(\widehat{G})\setminus \{0\} \to \mathbb{R}$ be the function defined by \eqref{eq:Rf}.  By definition,  for each $v \in L(\widehat{G})\setminus \{0\}$,  $R_f(v)$ is the Rayleigh quotient of $v$ with respect to $M_f^G$.  Thus,  as a direct consequence of Lemma~\ref{lemma 2.2},  we obtain a lower bound of $R_f(v)$.
\begin{corollary}[Lower bound of Rayleigh quotient]
\label{theorem 2.3}
Suppose that $f \in L(G)$ has real Fourier coefficients and $C$ is a real number.  If $h: G_1 \to \mathbb{R}$ is a finitely supported non-negative function such that $\left[ M_f^{G_1}(h) \right](z) \ge C h(z)$ for every $z\in G_1$.  Then $R_{f}(\eta_{*}(h))\ge C $. 
\end{corollary}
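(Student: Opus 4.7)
The plan is to transport the pointwise inequality $[M_f^{G_1}(h)](z) \ge C\,h(z)$ from $G_1$ to $\widehat{G}$ via the pushforward $\eta_{\ast}$, then read it off as a Rayleigh-quotient inequality. Write $v \coloneqq \eta_{\ast}(h) \in L(\widehat{G})$. By the commutative diagram of Lemma~\ref{lemma 2.2},
\[
M_f^G(v) \;=\; M_f^G(\eta_{\ast}(h)) \;=\; \eta_{\ast}\bigl(M_f^{G_1}(h)\bigr),
\]
so $|G|\,\widehat{f}\ast v = \eta_{\ast}\bigl(M_f^{G_1}(h)\bigr)$ and consequently
\[
R_f(v) \;=\; \frac{\langle M_f^G(v),\,v\rangle}{\langle v,v\rangle} \;=\; \frac{\bigl\langle \eta_{\ast}(M_f^{G_1}(h)),\;\eta_{\ast}(h)\bigr\rangle}{\bigl\langle \eta_{\ast}(h),\,\eta_{\ast}(h)\bigr\rangle}.
\]

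Next I would exploit the non-negativity assumptions. Since $h \ge 0$ on $G_1$, the definition \eqref{eq:eta*} of $\eta_{\ast}$ gives $\eta_{\ast}(h)(\chi) = \sum_{z \in \eta^{-1}(\chi)} h(z) \ge 0$ for every $\chi \in \widehat{G}$; in particular $v \ge 0$. Moreover, because $\widehat{f}$ is real-valued, the operator $M_f^{G_1}$ preserves real-valuedness, so $M_f^{G_1}(h)$ is a real finitely supported function, and the pointwise bound $[M_f^{G_1}(h)](z) \ge C\,h(z)$ may be summed over each fiber $\eta^{-1}(\chi)$ to yield
\[
\eta_{\ast}\bigl(M_f^{G_1}(h)\bigr)(\chi) \;\ge\; C\,\eta_{\ast}(h)(\chi), \qquad \chi \in \widehat{G}.
\]

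Multiplying this fiberwise inequality by the non-negative real number $\eta_{\ast}(h)(\chi)$ and averaging over $\widehat{G}$ gives
\[
\bigl\langle \eta_{\ast}(M_f^{G_1}(h)),\,\eta_{\ast}(h)\bigr\rangle \;\ge\; C\,\bigl\langle \eta_{\ast}(h),\,\eta_{\ast}(h)\bigr\rangle.
\]
Dividing by $\langle v,v\rangle > 0$ (which is positive because $v \ge 0$ and $v$ is not identically zero, since $\eta_{\ast}(h)=0$ would force $h=0$ by non-negativity) yields $R_f(v) \ge C$, as desired.

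The main potential obstacle is a conceptual rather than technical one: ensuring that the fiberwise pointwise bound really lifts to an inner-product bound, which hinges crucially on the sign hypotheses (both $h \ge 0$ and $\widehat{f}$ real). Once those are in hand, the computation is essentially forced by Lemma~\ref{lemma 2.2}; no further structural input about $G_1$, $\eta$, or $S$ is needed beyond what the diagram already encodes.
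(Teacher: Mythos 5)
Your proof is correct and follows essentially the same route as the paper's: use Lemma~\ref{lemma 2.2} to commute $M_f^G \circ \eta_{\ast} = \eta_{\ast}\circ M_f^{G_1}$, push the pointwise bound $M_f^{G_1}(h) \ge Ch$ forward fiberwise to get $M_f^G(v) \ge Cv$, and then exploit the non-negativity of $v$ to turn this into the Rayleigh-quotient inequality. You simply spell out a couple of steps the paper leaves implicit (that $\eta_{\ast}$ preserves pointwise order because the fiber sums are finite, and that $\langle v,v\rangle > 0$ since $v\ge 0$ forces $h=0$ if $v=0$), which is harmless elaboration rather than a different argument.
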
 
\begin{proof}
Let $v \coloneqq \eta_{*}(h)$.  By Lemma~\ref{lemma 2.2}, we have $M^{G}_f (v) = \left[ \eta_{*}\circ M_f^{G_1} \right] (h)\ge C \eta_{*}(h) = C v$.  This together with the non-negativity of $v$ implies
\[
R_{f}(\eta_{*}(h)) = R_{f}(v)= \frac{\left<M_f^G(v), v \right>}{\lVert v \rVert^2}  \ge C.   \qedhere
\]
\end{proof}

\begin{proposition}\label{prop:general bound for nu2}
Suppose that $f \in L(G)$ is real-valued and positive definite.  If $h: G_1 \to \mathbb{R}$ is a finitely supported non-negative function such that $\mu \coloneqq |\supp(\eta_\ast(h))| < |G|$ and $\left[ M_f^{G_1}(h) \right] \ge C h $ for some real number $C>0$.   Then 
\[
\nu_2(f) \ge \nu_1(f) +\frac{|G|  (C -\nu_1(f)) }{|G|-\mu }.
\]
\end{proposition}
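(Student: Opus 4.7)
The plan is to chain Corollary~\ref{theorem 2.3} and Proposition~\ref{cor 2.2}, closing with a short algebraic rearrangement. First I would set $v \coloneqq \eta_\ast(h) \in L(\widehat{G})$. The definition \eqref{eq:eta*} shows that non-negativity of $h$ forces non-negativity of $v$, and by hypothesis $|\supp(v)| = \mu < |G|$. Moreover, since $f$ is positive definite its Fourier coefficients are non-negative, hence real, so Corollary~\ref{theorem 2.3} applies to the pointwise inequality $M_f^{G_1}(h) \ge C h$ and yields $R_f(v) \ge C$; in particular $v \neq 0$ (otherwise $R_f(v)$ would be undefined, and since $C > 0$ the hypothesis would force $h = 0$, contradicting $\mu \ge 1$).

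Next, I would apply Proposition~\ref{cor 2.2} to the pair $(f, v)$: the condition $\mu < |G|$ is precisely what is needed, and it yields
\[
\nu_2(f) \ge \frac{|G|\, R_f(v) - \mu f(1)}{|G| - \mu}.
\]
Since $|G| - \mu > 0$ and $R_f(v) \ge C$, substituting $C$ for $R_f(v)$ only weakens the right-hand side, so
\[
\nu_2(f) \ge \frac{|G|\, C - \mu f(1)}{|G| - \mu}.
\]

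To finish, I would invoke the identity $\nu_1(f) = f(1)$, valid for every $f \succeq 0$ (noted in the paragraph preceding Proposition~\ref{cor2.1}), and regroup:
\[
\frac{|G|\, C - \mu \nu_1(f)}{|G| - \mu} = \nu_1(f) + \frac{|G|\,(C - \nu_1(f))}{|G| - \mu},
\]
which is exactly the claimed bound. There is no genuine obstacle here: the proposition is simply the concatenation of Corollary~\ref{theorem 2.3} (which lifts the pointwise inequality on $G_1$ to a Rayleigh-quotient lower bound on $\widehat{G}$) and Proposition~\ref{cor 2.2} (which converts a Rayleigh-quotient bound into a lower bound on $\nu_2(f)$), followed by trivial algebra. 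The only point worth a moment's check is that $v = \eta_\ast(h)$ is a non-zero, non-negative element of $L(\widehat{G})$ of support size $\mu < |G|$, so that Proposition~\ref{cor 2.2} is legally applicable.
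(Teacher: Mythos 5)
Your proof is correct and follows essentially the same route as the paper: set $v=\eta_\ast(h)$, combine Corollary~\ref{theorem 2.3} (to get $R_f(v)\ge C$) with Proposition~\ref{cor 2.2}, and rearrange using $\nu_1(f)=f(1)$; you even supply the intermediate inequality $R_f(v)\ge C$ that the paper glosses over. The only small imprecision is your parenthetical on $v\neq 0$: since $h\ge 0$ and $\eta_\ast$ sums over fibers, $v=0$ already forces $h=0$ directly (regardless of $C>0$), and the real point is that the proposition tacitly assumes $h\not\equiv 0$, i.e.\ $\mu\ge 1$, which neither you nor the paper states explicitly.
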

\begin{proof}
Denote $v \coloneqq \eta_{\ast}(h)$.  According to Proposition~\ref{cor 2.2} and Corollary~\ref{theorem 2.3},  we obtain
\[
  \nu_2(f)\ge \frac{|G| R_{f}(v)- \nu_1(f) \mu }{|G|-\mu } = \nu_1(f) +\frac{|G|  (C -\nu_1(f)) }{|G|-\mu }.  \qedhere
\]
\end{proof}
\subsection{An estimate for $\nu_2(f)$}
If $ f $ is a real-valued function,  then 
\[
\supp(\widehat{f}) = \{ \chi_1,  \overline{\chi_1}, \dots, \chi_r, \overline{\chi_r},  \chi_{r+1}, \dots,  \chi_{r + t}\}.
\]
Here $\overline{\chi_i} = \chi_i$ if and only if $r +1 \le i \le r+t$.  
Denote $ G_1 \coloneqq \mathbb{Z}^r \times (\mathbb{Z}/ 2\mathbb{Z})^t $.  We consider a group homomorphism
\[
\eta : G_1 \to \widehat{G},\quad \eta(m_1,\dots,  m_{r+t}) = \prod_{i = 1}^{r+t} \chi_i^{m_i}.
\]
For each $1 \le i \le r+t$,  we denote by $e_i$ the element of $G_1$ whose entries are all zero except for the $i$-th,  which is $1$.  Let $S = \{ \pm e_1,  \dots,  \pm e_r,  e_{r+1},  e_{r+t}\}$ where .  Then clearly we have $|S| = |\supp(\widehat{f})|$ and $\eta(S) = \supp(\widehat{f})$.  
\begin{lemma}[Auxiliary function]
  \label{lemma 2.3}
Suppose that $f\in L(G)$ is a real-valued positive definite function such that $r \ge 1$  Let $G_1$ and $S$ be as above and let $m_1,\dots, m_r$ be fixed positive integers.  We define $h_0 \in L_0(G_1)$ by
\[
h_0(x_1,\dots, x_{r+t}) = \begin{cases}
\prod_{i=1}^r \sin\left( \frac{\pi x_i}{m_i} \right) \quad & \text{if~} 1\leq x_i\leq m_i-1,\; 1\leq i\leq r,\; r \ge 1 \\
          0 \quad &\text{otherwise}.
\end{cases}
\]
Then $\left[ M^{G_1}_f (h_0)  \right] (x)\geq \left(2 \sum_{i=1}^{r}\cos\left( \frac{\pi}{m_i} \right) \widehat{f}(\chi_i)  + \sum_{j=1}^t \widehat{f}(\chi_{r+j}) \right)h_0(x) $ for any $x\in G_1$. 
  \end{lemma}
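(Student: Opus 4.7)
The plan is to compute $\left[ M_f^{G_1}(h_0) \right](x)$ directly from the definition \eqref{eq:AfG1}, splitting the sum over $S = \{\pm e_1,\dots,\pm e_r,e_{r+1},\dots,e_{r+t}\}$ according to the two types of generators. The key conceptual point is that $h_0$ is designed to be a Dirichlet eigenfunction of the discrete Laplacian on the box $\prod_{i=1}^{r}\{1,\dots,m_i-1\}$, so on the support of $h_0$ the asserted inequality will in fact be an equality.

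Unfolding the definition of $M_f^{G_1}$ and using $\eta(\pm e_i) = \chi_i^{\pm 1}$ for $i\le r$ together with $\eta(e_{r+j}) = \chi_{r+j}$ for $j\le t$, one obtains
\[
\left[ M_f^{G_1}(h_0)\right](x) = \sum_{i=1}^{r} \Bigl(\widehat{f}(\chi_i^{-1}) h_0(x + e_i) + \widehat{f}(\chi_i) h_0(x - e_i)\Bigr) + \sum_{j=1}^{t}\widehat{f}(\chi_{r+j}) h_0(x + e_{r+j}).
\]
Two simplifications make this manageable. First, since $f$ is real-valued and positive definite, every Fourier coefficient is a non-negative real number, so Lemma~\ref{theorem 2.1}\eqref{theorem 2.1:item4} yields $\widehat{f}(\chi_i^{-1}) = \widehat{f}(\chi_i)$. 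Second, $h_0$ depends only on the first $r$ coordinates and is therefore invariant under translation by any $e_{r+j}$, giving $h_0(x + e_{r+j}) = h_0(x)$.

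What remains is to evaluate $h_0(x + e_i) + h_0(x - e_i)$. When $h_0(x) > 0$, i.e.\ when $1 \le x_i \le m_i - 1$ for every $i \le r$, the product-to-sum identity $\sin(a+b) + \sin(a-b) = 2\sin(a)\cos(b)$, applied with $a = \pi x_i/m_i$ and $b = \pi/m_i$, gives $h_0(x + e_i) + h_0(x - e_i) = 2\cos(\pi/m_i)\, h_0(x)$; summing over $i$ and combining with the contribution from the last $t$ generators produces exactly the claimed lower bound as an equality. The boundary cases $x_i \in \{1, m_i - 1\}$ cause no trouble because $\sin(0) = \sin(\pi) = 0$ matches $h_0$ vanishing just outside its support, which is precisely the reason for the choice of a sine product. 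When $h_0(x) = 0$ the right-hand side of the target inequality is zero, while the left-hand side is a sum of non-negative terms, so the bound is trivial. The only real obstacle is this bookkeeping at the boundary of the box, and it is absorbed automatically by the vanishing of $\sin$ at $0$ and $\pi$.
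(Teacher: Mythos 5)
Your argument is correct and follows essentially the same route as the paper's proof: unfold the definition of $M_f^{G_1}$ over $S$, use $\widehat{f}(\chi_i^{-1}) = \widehat{f}(\chi_i)$ (real, non-negative Fourier coefficients) and the translation-invariance of $h_0$ in the last $t$ coordinates, then split into the interior case (where the sine product-to-sum identity gives equality, with the $\sin(0)=\sin(\pi)=0$ cancellation handling the box boundary) and the exterior case (where the left side is a non-negative sum and the right side vanishes). The eigenfunction interpretation you give at the outset is a nice way to motivate the same computation the paper carries out.
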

  \begin{proof}
By definition and Lemma~\ref{theorem 2.1},  we have
\begin{align*}
\left[ M^{G_1}_f (h_0) \right] (x) &=\sum_{i=1}^{r}\left( \widehat{f}(\chi_i )h_0(e_i+x)+\widehat{f}(\overline{\chi_i})h_0(-e_i+x) \right) +\sum_{j=1}^{t}\widehat{f}( \chi_{r+j} ) h_0(e_{r+j}+ x)       \\
&=\sum_{i=1}^{r}  \widehat{f}(\chi_i ) \left( h_0(e_i+x)+ h_0(-e_i+x) \right)+\sum_{j=1}^{t}\widehat{f}( \chi_{r+j} ) h_0(e_{r+j}+ x)   \\
&=\sum_{i=1}^{r}  \widehat{f}(\chi_i ) \left(h_0(e_i+x)+ h_0(-e_i+x) \right)+\sum_{j=1}^{t}\widehat{f}( \chi_{r+j} ) h_0(x)  
\end{align*}
for each $x=(x_1,\cdots,x_{r+t}) \in G_1$.  We split the discussion into two cases: 
\begin{itemize}
\item There exists some $1\le i\le r$ such that $x_i<1$ or $x_i>m_i-1$.  Then we have $v(x) = 0$,  which implies 
\[
\left[ M^{G_1}_f (h_0) \right] (x) \geq 0 = \left( \sum_{i=1}^{r} 2 \cos \left( \frac{\pi}{m_i} \right) \widehat{f}(\chi_i) + \sum_{j=1}^t \widehat{f}(\chi_{r+j}) \right) h_0(x) 
\]
\item For each $1\leq i\leq r$,  $1 \leq x_i \leq m_i - 1$.  It is straightforward to verify that
\[
h_0(e_i + x) + h_0(-e_i +x) = 2 \cos\left(\frac{\pi}{m_i}\right) h_0(x).
\]
This leads to 
\[
\left[ M^{G_1}_f (h_0) \right] ( x )= \left( \sum_{i=1}^{r} 2 \cos \left( \frac{\pi}{m_i} \right) \widehat{f}(\chi_i) + \sum_{j=1}^t \widehat{f}(\chi_{r+j}) \right) h_0(x).  \qedhere
\]
\end{itemize}
\end{proof}

We recall that for any $h\in L_0(G_1)$,  $\chi \in \supp(\eta_{*}(h))$ if and only if $\eta^{-1}(\chi) \cap \supp(h) \neq \emptyset$,  which is further equivalent to $\chi \in \eta( \supp(h) )$.  Thus,  for the function $v$ defined in Lemma~\ref{lemma 2.3},  we have 
  \begin{equation}\label{eq:support estimate}
\mu \coloneqq |\supp(\eta_{*}(h_0))| = |\eta(\supp(v))| \leq |\supp(h_0)| = 2^t \prod_{i=1}^{r}(m_i - 1). 
  \end{equation}
\begin{proposition}
\label{theorem 2.4}
Let $f \in L(G)$ be a real-valued positive definite function such that $r \ge 1$.  Then 
\[
\nu_2(f) \ge \nu_1(f) - \frac{4 |G| \sum_{i=1}^{r} \widehat{f}(\chi_i) \sin^2\left( \frac{\pi}{2m_i} \right) }{|G|-2^t\prod_{i=1}^{r}(m_i-1)}
\]
for all positive integers $m_1,\dots,  m_r$ such that $2^t\prod_{i=1}^{r}(m_i-1)<|G|$. 
  \end{proposition}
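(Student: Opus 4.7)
The plan is to assemble the three main ingredients already prepared in this section: Lemma~\ref{lemma 2.3}, the support estimate~\eqref{eq:support estimate}, and Proposition~\ref{prop:general bound for nu2}. Concretely, I would use the auxiliary function $h_0$ of Lemma~\ref{lemma 2.3} as the test function $h$ feeding Proposition~\ref{prop:general bound for nu2}.

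First, set
\[
C \coloneqq 2\sum_{i=1}^{r}\cos\!\left(\frac{\pi}{m_i}\right)\widehat{f}(\chi_i) + \sum_{j=1}^{t}\widehat{f}(\chi_{r+j}).
\]
Lemma~\ref{lemma 2.3} gives $M_f^{G_1}(h_0) \ge C\,h_0$ pointwise, and $h_0 \ge 0$ is finitely supported and nonzero (using $r \ge 1$ and $m_i \ge 2$; the case $m_i=1$ makes the bound vacuous). The support estimate~\eqref{eq:support estimate} yields $\mu \coloneqq |\supp(\eta_\ast(h_0))| \le 2^t\prod_{i=1}^{r}(m_i-1)$, which is strictly less than $|G|$ by hypothesis. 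Thus all hypotheses of Proposition~\ref{prop:general bound for nu2} hold, and I get
\[
\nu_2(f) \;\ge\; \nu_1(f) + \frac{|G|\,(C-\nu_1(f))}{|G|-\mu}.
\]

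Next I would compute $\nu_1(f)-C$ explicitly. Since $f$ is real-valued and positive definite, Lemma~\ref{theorem 2.1}\eqref{theorem 2.1:item4} gives $\widehat{f}(\overline{\chi_i}) = \widehat{f}(\chi_i)$, so
\[
\nu_1(f) = f(1) = \sum_{\chi \in \widehat{G}} \widehat{f}(\chi) = 2\sum_{i=1}^{r}\widehat{f}(\chi_i) + \sum_{j=1}^{t}\widehat{f}(\chi_{r+j}),
\]
and hence, using the half-angle identity $1-\cos\theta = 2\sin^2(\theta/2)$,
\[
\nu_1(f)-C \;=\; 2\sum_{i=1}^{r}\widehat{f}(\chi_i)\!\left(1-\cos\frac{\pi}{m_i}\right) \;=\; 4\sum_{i=1}^{r}\widehat{f}(\chi_i)\sin^2\!\left(\frac{\pi}{2m_i}\right) \;\ge\; 0,
\]
where nonnegativity uses $\widehat{f}(\chi_i) \ge 0$.

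Finally, since the correction term is of the form $-\,|G|(\nu_1(f)-C)/(|G|-\mu)$ with a nonnegative numerator, replacing $\mu$ by its upper bound $2^t\prod_{i=1}^{r}(m_i-1)$ shrinks the denominator and therefore only weakens the lower bound, giving the stated inequality. The only real subtlety — and arguably the only place one can slip — is this monotonicity step: one must check the sign of $\nu_1(f)-C$ before deciding whether to apply the upper or lower bound on $\mu$. Since all hard work (the eigenfunction-type construction of $h_0$ and the covering argument) has been done in Lemmas~\ref{lemma 2.2} and~\ref{lemma 2.3} and Proposition~\ref{prop:general bound for nu2}, no additional technical obstacle remains.
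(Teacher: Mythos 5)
Your proof is correct and follows essentially the same route as the paper: define the constant $C$ from Lemma~\ref{lemma 2.3}, feed $h_0$ into Proposition~\ref{prop:general bound for nu2} using the support estimate~\eqref{eq:support estimate}, and finish with the half-angle identity and the observation that enlarging $\mu$ to its upper bound only weakens the (nonpositive) correction term. The paper carries out the same chain of inequalities in a more compressed form, leaving the sign check implicit; your explicit verification that $\nu_1(f)-C\ge 0$ before replacing $\mu$ by its upper bound, and your note that $h_0\not\equiv 0$ requires $m_i\ge 2$ (otherwise the test function vanishes and Proposition~\ref{prop:general bound for nu2} is not applicable), are both sound observations that the paper glosses over.
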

  \begin{proof}
Denote $C \coloneqq 2 \sum_{i=1}^{r}\cos\left( \frac{\pi}{m_i} \right) \widehat{f}(\chi_i)  + \sum_{j=1}^t \widehat{f}(\chi_{r+j})$.  We recall that $\nu_1(f) = f(1) =2 \sum_{i=1}^{r} \widehat{f}(\chi_i) + \sum_{j=1}^t \widehat{f}(\chi_{r+ j})$.  According to \eqref{eq:support estimate},  Lemma~\ref{lemma 2.3} and Proposition~\ref{prop:general bound for nu2},  we obtain
\begin{align*}
 \nu_2(f) \ge \nu_1(f) +\frac{|G|  (C -\nu_1(f)) }{|G|-\mu } &= \nu_1(f)-\frac{2|G| \sum_{i=1}^{r} \widehat{f}(\chi_i)\left(1-\cos \left(\frac{\pi}{m_i} \right) \right)  }{|G|-\mu }\\
                    &\ge \nu_1(f)- \frac{4 |G| \sum_{i=1}^{r} \widehat{f}(\chi_i)\sin^2\left( \frac{\pi}{2m_i} \right) }{|G|-2^t\prod_{i=1}^{r}(m_i-1)}.   \qedhere
\end{align*} 
    \end{proof}
As a direct consequence of Proposition~\ref{theorem 2.4},  we have the following:
\begin{theorem}[Lower bound of $\nu_2(f)$]
\label{cor2.3}
Let $f \in L(G)$ be a real-valued positive definite function such that $r \ge 1$.  For each positive integer $m < \left(2^{-t} |G| \right)^{1/r}+1$,  it holds that
\begin{equation}\label{cor2.3:eq1}
\nu_2(f) \ge \nu_1(f) - \frac{4 |G| \sin^2\left( \frac{\pi}{2m} \right) }{|G|- 2^t (m-1)^r}  \sum_{i=1}^{r}\widehat{f}(\chi_i) \ge  \nu_1(f) \left( 1 - \frac{ \pi^2  |G|  }{2m^2 \left( |G|-2^t(m-1)^r \right)}  \right)
\end{equation}  
In particular,  given any positive integer $s$,  we have 
\begin{equation}\label{cor2.3:eq2}
\frac{\nu_1(f) - \nu_2(f)}{\nu_1(f)}  = O(|G|^{-4/s}).
\end{equation}  
for every real-valued positive definite function $f$ on a finite abelian group $G$ such that $|\supp(\widehat{f})| \le s$ and $r \ge 1$.
\end{theorem}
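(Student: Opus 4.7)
The theorem is presented as a direct consequence of Proposition~\ref{theorem 2.4}, so the plan is to specialize that proposition and then perform two elementary estimations.

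First, I would derive the first inequality in \eqref{cor2.3:eq1} by applying Proposition~\ref{theorem 2.4} with the uniform choice $m_1 = m_2 = \cdots = m_r = m$. Under this choice the product $\prod_{i=1}^r (m_i-1)$ collapses to $(m-1)^r$ and the sum $\sum_{i=1}^r \widehat{f}(\chi_i)\sin^2(\pi/(2m_i))$ factors as $\sin^2(\pi/(2m)) \sum_{i=1}^r \widehat{f}(\chi_i)$. The hypothesis $m < (2^{-t}|G|)^{1/r}+1$ rearranges to $2^t(m-1)^r < |G|$, which is exactly the admissibility condition of Proposition~\ref{theorem 2.4}.

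For the second inequality in \eqref{cor2.3:eq1}, I would use two elementary facts. The first is the standard bound $\sin(\pi/(2m)) \le \pi/(2m)$, giving $\sin^2(\pi/(2m)) \le \pi^2/(4m^2)$. The second is that, since $f$ is positive definite and real-valued, the identity $\nu_1(f) = f(1) = 2\sum_{i=1}^r \widehat{f}(\chi_i) + \sum_{j=1}^t \widehat{f}(\chi_{r+j})$ together with the non-negativity of every $\widehat{f}(\chi)$ yields $\sum_{i=1}^r \widehat{f}(\chi_i) \le \nu_1(f)/2$. Substituting these two bounds into the first inequality of \eqref{cor2.3:eq1} produces the claimed multiplicative form.

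Finally, I would establish \eqref{cor2.3:eq2} by optimizing $m$. With $s = |\supp(\widehat{f})|$, the decomposition $s = 2r + t$ gives $r \le s/2$, hence $-2/r \le -4/s$. Choose any constant $c \in (0,2^{-t/r})$ and set $m = \lfloor c\,|G|^{1/r}\rfloor$. For $|G|$ large this satisfies $m < (2^{-t}|G|)^{1/r}+1$, and $|G| - 2^t(m-1)^r \ge (1 - 2^t c^r)|G| = \Theta(|G|)$, while $m^2 = \Theta(|G|^{2/r})$. Plugging into the multiplicative bound gives
\[
\frac{\nu_1(f) - \nu_2(f)}{\nu_1(f)} \le \frac{\pi^2 |G|}{2m^2(|G| - 2^t(m-1)^r)} = O(|G|^{-2/r}) = O(|G|^{-4/s}),
\]
which is \eqref{cor2.3:eq2}. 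There is no real obstacle here beyond careful bookkeeping: the only substantive step is verifying that the chosen $m$ both lies in the admissible range and produces a denominator of order $|G|$; the rest is direct substitution.
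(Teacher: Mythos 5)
Your proof of the first display \eqref{cor2.3:eq1} coincides with the paper's: set $m_1=\cdots=m_r=m$ in Proposition~\ref{theorem 2.4}, note that the admissibility condition $2^t(m-1)^r<|G|$ is precisely the hypothesis on $m$, and then use $\sin^2(\pi/(2m))\le\pi^2/(4m^2)$ together with $2\sum_{i=1}^r\widehat{f}(\chi_i)\le\nu_1(f)$. For the asymptotic statement \eqref{cor2.3:eq2}, you and the paper take genuinely different optimizations of $m$, both correct. The paper chooses $m=\alpha|G|^{2/s}$ (with $\alpha\in(0,1)$, $m\ge 5$, $2^s<|G|$) and applies the arithmetic trick $2^t(m-1)^r\le(m-1)^{t/2}(m-1)^r=(m-1)^{s/2}$, so the denominator $|G|-2^t(m-1)^r$ is bounded below by $(1-\alpha^{s/2})|G|$, and $m^{-2}=\alpha^{-2}|G|^{-4/s}$ drops out directly. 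You instead choose $m=\lfloor c|G|^{1/r}\rfloor$ with $c\in(0,2^{-t/r})$, which makes the denominator comparable to $(1-2^tc^r)|G|$ without any inequality on $2^t$, giving $O(|G|^{-2/r})$; you then convert exponents via $2r+t=|\supp(\widehat{f})|\le s\Rightarrow r\le s/2\Rightarrow|G|^{-2/r}\le|G|^{-4/s}$. Your route has a small advantage: when $t>0$ the intermediate exponent $-2/r$ is strictly better than $-4/s$, so you are actually proving something slightly stronger before relaxing to the stated form. The only cosmetic imprecision is that you write $s=|\supp(\widehat{f})|$ where the theorem only assumes $|\supp(\widehat{f})|\le s$; since $|G|^{-4/s'}\le|G|^{-4/s}$ when $s'\le s$, this costs nothing, but it is worth making explicit. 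Likewise, one should remark (as both proofs leave implicit) that the implied constant may be taken uniform over the finitely many pairs $(r,t)$ with $2r+t\le s$, $r\ge1$, and that for the bounded range of $|G|$ where $m=\lfloor c|G|^{1/r}\rfloor$ fails to be a positive integer, the ratio $(\nu_1-\nu_2)/\nu_1\le 2$ holds trivially from $|f|\le f(1)$.
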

\begin{proof}
We observe that $2 \sum_{i=1}^{r} \widehat{f}(\chi_i) \le \nu_1(f)$.  Then \eqref{cor2.3:eq1} is obtained from Theorem~\ref{theorem 2.4} immediately by setting $m_1 = \cdots = m_r = m$.  For \eqref{cor2.3:eq2},  we notice that $ 2^t (m-1)^r \leq (m-1)^{t/2} (m-1)^r = (m-1)^{s/2}$ if $m \geq 5 $.  Moreover,  if $2^s < |G|$ and $m  = \alpha  |G|^{2/s}$ for some $\alpha \in (0,1)$,  then we have 
\begin{equation}\label{cor2.3:eq3}
\frac{\nu_1(f) - \nu_2(f)}{\nu_1(f)} \le \frac{2 \sin^2 \left(\frac{\pi}{2m} \right)}{ 1 - \alpha^{s/2}} \le  \frac{ \pi^2 }{2( 1 - \alpha^{s/2})m^2} = \frac{\pi^2 }{2 (1 - \alpha^{s/2}) \alpha^2} |G|^{-4/s}.  \qedhere
\end{equation}
\end{proof}
In the subsequent discussion,  applications of Theorem~\ref{cor2.3} are of particular interest to us.  Nonetheless,  Proposition~\ref{theorem 2.4} may provide a better lower bound of $\nu_2(f)$ than Theorem~\ref{cor2.3},  as the following example illustrates.
\begin{example}\label{ex:bound comparision}
On $G=\mathbb{Z}/{40 \mathbb{Z}}$,  we consider $f(x) = 2\chi(x)+2\chi^{-1}(x)+\chi^{5}(x)+\chi^{-5}(x)$ where $\chi(x) \coloneqq \exp( 2\pi ix/40)$.  A direct calculation reveals that $\nu_1(f) = 6$,  $\nu_2(f) =  \nu_3(f) = 5.364$ and $\nu_4(f) = \nu_{5}(f) = 3.804$.  It is straightforward to verify that the best lower bound of $\nu_2(f)$ obtained by Proposition~\ref{theorem 2.4} is $4.214$ when $(m_1,m_2) = (7,4)$,  while the bound obtained by Theorem~\ref{cor2.3} is $4.026$ when $m= 5$. 
\end{example}

We conclude this subsection by a brief discussion on  the best choice of $m$ in Theorem~\ref{cor2.3}.  To this end,  we need the following lemma.   
\begin{lemma}\label{lem:selectm}
Let $d$ be a positive integer and let $c$ be a positive real number.  We define a function $ \theta(x) = x^2 \left( 1 - c(x - 1)^d \right)$ on $[1,\infty)$.  Then $\theta$ has a maximizer and any maximizer $x_0$ of $\theta$ must satisfy  
        \[
    \left( \frac{2}{2c + cd} \right)^{\frac{1}{d}} < x_0 < \left( \frac{2}{2c + cd} \right)^{\frac{1}{d}} + 1.
    \]
\end{lemma}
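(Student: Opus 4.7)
The plan is to first establish existence of a maximizer by a compactness argument, then locate any interior maximizer via the critical equation $\theta'(x_0) = 0$ and pin its value down using the monotonicity of an auxiliary function.

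For existence, I would observe that $\theta$ is continuous on $[1, \infty)$ with $\theta(1) = 1$, while the dominant term of $\theta(x) = x^2 - c x^2 (x - 1)^d$ is $-c x^{d + 2}$, so $\theta(x) \to -\infty$ as $x \to \infty$. Hence there exists some $M > 1$ with $\theta(x) < 1 \leq \theta(1)$ for every $x \geq M$, and continuity on the compact interval $[1, M]$ furnishes a global maximizer on $[1, \infty)$.

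For the quantitative bounds, I would substitute $y = x - 1$ and set $\widetilde{\theta}(y) := \theta(y + 1) = (y + 1)^2 (1 - c y^d)$. A direct differentiation gives
\[
\widetilde{\theta}'(y) = (y + 1)\bigl[\, 2 - c\, \phi(y) \,\bigr], \qquad \phi(y) := (d + 2)\, y^d + d\, y^{d - 1}.
\]
Since $\phi'(y) = d(d + 2) y^{d - 1} + d(d - 1) y^{d - 2} > 0$ for $y > 0$, $\phi$ is strictly increasing on $(0, \infty)$, so any interior maximizer $y_0 > 0$ is the unique positive root of $\phi(y) = 2/c$. Writing $A := \bigl(2 / (c (d + 2))\bigr)^{1/d}$, so that $(d + 2) A^d = 2/c$, I would then evaluate $\phi$ at the two candidate endpoints. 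At $y = A$, one has $\phi(A) = (d + 2) A^d + d A^{d - 1} = 2/c + d A^{d - 1} > 2/c$, which by monotonicity gives $y_0 < A$. At $y = A - 1$, assuming $A > 1$ (otherwise $A - 1 \leq 0 < y_0$ is immediate), the rearrangement
\[
\tfrac{2}{c} - \phi(A - 1) = (d + 2) A \bigl[A^{d - 1} - (A - 1)^{d - 1}\bigr] + 2 (A - 1)^{d - 1} > 0
\]
shows $\phi(A - 1) < 2/c$ and hence $y_0 > A - 1$. Translating back, $A < x_0 < A + 1$.

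It remains to dispose of the boundary case $x_0 = 1$. For $d \geq 2$ one has $\phi(0) = 0$, so $\widetilde{\theta}'(0) = 2 > 0$ and $x_0 = 1$ cannot be a maximizer; for $d = 1$, $\phi(0) = 1$ and $x_0 = 1$ can occur only when $c \geq 2$, in which case $A = 2/(3c) \leq 1/3 < 1$ and the bound $A < 1 = x_0 < A + 1$ holds trivially. The main subtle point is the lower bound on $x_0$, which rests on the algebraic manipulation used above to compare $\phi(A - 1)$ with $(d + 2) A^d$; the existence step and the upper bound are routine.
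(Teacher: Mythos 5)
Your proof is correct and follows essentially the same route as the paper: both arguments reduce to the critical equation (in your notation $\phi(y_0) = 2/c$) and extract the bounds from the elementary estimate $\phi(z) < (d+2)(z+1)^d$, the paper applying it directly at $y_0 = x_0 - 1$ and you applying it at $A-1$ after noting $\phi$ is increasing, which is the same computation after the change of variable $y = x-1$. One small bonus in your version: the paper asserts $\theta'(1) = 2 > 0$, which actually fails when $d = 1$ and $c \ge 2$ (there $\theta'(1) = 2 - c \le 0$ and the maximizer may sit at $x_0 = 1$), a boundary case you correctly isolate and dispose of at the end.
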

\begin{proof}
Since $c,d >0$ and $\theta'(x) = 2x(1 - c(x-1)^d) - cdx^2 (x-1)^{d-1}  $,  we may conclude that $\theta'(1) = 2 > 0 $ and $\theta'(x) < 0 $ for sufficiently large $ x $.  This implies that $\theta$ attains its maximum at some point $x_0$ in $[1,\infty)$.  In particular,  $\theta'(x_0) = 0$.  Thus,  we obtain 
\[
(2c+cd)x_0^d >  2=2c(x_0-1)^d+cd(x_0-1)^{d-1} x_0  > (2c + cd) (x_0 - 1)^{d},
\] 
which implies $\left( 2/ (2c + cd) \right)^{1/d} < x_0 < \left( 2/ (2c + cd) \right)^{1/d} + 1$. 
\end{proof}
Now we apply Lemma~\ref{lem:selectm} to the lower bound in \eqref{cor2.3:eq1} with $c = 2^t/|G|$ and $d = r$.  
\begin{proposition}\label{prop:selectm}
Let $f, G, t,r,  \nu_1(f)$ and $\nu_2(f)$ be as in Theorem~\ref{cor2.3}.  If $\kappa \coloneqq \left( 2^{1-t}(2 +r)^{-1} |G| \right)^{1/r}$ is not an integer,  then the best lower bound given by \eqref{cor2.3:eq1} is 
\begin{equation*}
\nu_2(f) \ge  \nu_1(f) \left( 1 - \frac{ \pi^2 |G|  }{2{\lceil \kappa \rceil}^2 \left( |G|-2^t {\lfloor \kappa \rfloor}^r \right)}  \right).
\end{equation*}  
\end{proposition}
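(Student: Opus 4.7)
The plan is to reduce the optimization over $m$ in \eqref{cor2.3:eq1} to a one-variable discrete problem and then to apply Lemma~\ref{lem:selectm}. First, setting $m_1 = \cdots = m_r = m$, the lower bound \eqref{cor2.3:eq1} can be rewritten as $\nu_1(f)\left(1 - \pi^2/(2\theta(m))\right)$, where $\theta(x) \coloneqq x^2(1 - c(x-1)^r)$ and $c \coloneqq 2^t/|G|$. Thus finding the best lower bound is equivalent to maximizing $\theta(m)$ over positive integers $m$ satisfying $m < (2^{-t}|G|)^{1/r} + 1$.

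Next I would apply Lemma~\ref{lem:selectm} with $d = r$ to obtain a continuous maximizer $x_0 \in (\kappa, \kappa + 1)$ of $\theta$. A computation of $\theta'(x) = 2x - 2cx(x-1)^r - crx^2(x-1)^{r-1}$, together with the observation that $x \mapsto c(x-1)^{r-1}[(r+2)x-2]$ is strictly increasing on $[1,\infty)$, shows that $\theta'$ has a unique zero in $[1,\infty)$. Hence $\theta$ is unimodal: strictly increasing on $[1, x_0]$ and strictly decreasing on $[x_0, \infty)$. Since $\kappa$ is not an integer, $\lceil \kappa \rceil$ is the unique integer in the open interval $(\kappa, \kappa+1)$, and unimodality confines the discrete maximum of $\theta$ over the feasible integers to $\{\lfloor \kappa \rfloor, \lceil \kappa \rceil, \lceil \kappa \rceil + 1\}$.

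The remaining step is to pin the discrete optimum to $m = \lceil \kappa \rceil$. I would do this by directly comparing $\theta(\lceil \kappa \rceil)$ to $\theta(\lfloor \kappa \rfloor)$ and $\theta(\lceil \kappa \rceil + 1)$, using the defining relation $c(2+r)\kappa^r = 2$ (which comes out of the proof of Lemma~\ref{lem:selectm}) together with the bounds $\lfloor \kappa \rfloor < \kappa < \lceil \kappa \rceil < \kappa + 1$. Once established, substituting $m = \lceil \kappa \rceil$ into \eqref{cor2.3:eq1}, and noting that $m - 1 = \lfloor \kappa \rfloor$ since $\kappa$ is not an integer, yields the stated formula.

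The main obstacle I anticipate is this final discrete comparison: Lemma~\ref{lem:selectm} localizes the continuous peak to an interval of length $1$ but does not by itself identify the optimal integer, so converting $x_0 \in (\kappa, \kappa+1)$ into discrete optimality of $\lceil \kappa \rceil$ requires exploiting the specific form of $\theta$ and the identity $c(2+r)\kappa^r = 2$ beyond the unimodality argument alone.
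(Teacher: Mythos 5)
Your unimodality analysis of $\theta$ is sound, and your worry about the final discrete comparison is well founded --- in fact that gap cannot be closed, because the proposition is false as stated. Lemma~\ref{lem:selectm} only localizes the continuous maximizer $x_0$ to the open interval $(\kappa,\kappa+1)$. When $x_0$ happens to land in $(\lceil\kappa\rceil,\kappa+1)$, unimodality tells you the discrete optimum is one of $\lceil\kappa\rceil$ and $\lceil\kappa\rceil+1$, and there is no general reason for $\lceil\kappa\rceil$ to win; the identity $c(2+r)\kappa^r=2$ does not break the tie in the desired direction.

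A concrete counterexample: take $r=2$, $t=0$, $|G|=127$, so $c=1/127$ and $\theta(m)=m^2\bigl(1-(m-1)^2/127\bigr)$. Here $\kappa=\sqrt{63.5}\approx 7.969$ is not an integer, $\lceil\kappa\rceil=8$, and the feasibility bound is $m<\sqrt{127}+1\approx 12.27$. Yet
\[
\theta(8)=\frac{64\cdot 78}{127}\approx 39.31 \;<\; \theta(9)=\frac{81\cdot 63}{127}\approx 40.18,
\]
so the best choice is $m=9=\lceil\kappa\rceil+1$, not $\lceil\kappa\rceil$. Indeed the continuous maximizer solves $(x-1)(2x-1)=127$, giving $x_0=(3+\sqrt{1017})/4\approx 8.72\in(\kappa,\kappa+1)$, and $x_0$ lies strictly past $\lceil\kappa\rceil$, with $\theta(\lceil x_0\rceil)>\theta(\lfloor x_0\rfloor)$. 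This configuration is realizable, e.g.\ $G=\mathbb{Z}/127\mathbb{Z}$ with $\widehat{f}$ supported on four non-real characters $\{\chi_1,\overline{\chi_1},\chi_2,\overline{\chi_2}\}$. What your argument correctly establishes is the weaker statement that the discrete optimum lies in $\{\lfloor\kappa\rfloor,\lceil\kappa\rceil,\lceil\kappa\rceil+1\}$; pinning it down further would require an extra hypothesis (for instance, information about the fractional part of $\kappa$ or a sharper localization of $x_0$), which the proposition does not supply.
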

In Example~\ref{ex:bound comparision},  the best lower bound provided by Theorem~\ref{cor2.3} is achieved when $m= 5 = \left\lceil \sqrt{20} \right\rceil$.  This is obtained through a direct calculation,  as confirmed by Proposition~\ref{prop:selectm}.
\subsection{A digression from the perspective of graph theory}\label{subsec:digression}
Let $G$ be a finite abelian group.  For any function $w:G \to \mathbb{C}$ such that $w(x) = w(x^{-1})$ for any $x\in G$,  we denote by $\Cay(G,w)$ be \emph{weighted Cayley graph} \cite{L.Babai} determined by the pair $(G,w)$.  By definition,  $\Cay(G,w)$ is the weighted graph with vertex set $G$,  edge set $E = \{ \{x,y\} \subseteq G: x^{-1}y \in \supp(w) \}$ and the weight function $\omega:  V \times V \to \mathbb{C}$ defined by $\omega(x,  y) = w(x^{-1}y)$.  In particular,  $\Cay(G,   \delta_S)$ is the usual Cayley graph $\Cay(G,S)$ where $\delta_S$ is the indicator function of a symmetric subset $S\subseteq G$.  

We notice that there is a one to one correspondence between the set of weighted Cayley graphs and $L(\widehat{G})$.  Indeed,  we have 
\begin{equation}\label{eq:Phi}
\Phi: \{ \Cay(G,w):  w \in L(G) \} \to L(\widehat{G}),\quad  \Phi(  \Cay(G,w)) =|G| \widehat{w}.
\end{equation}
It is straightforward to verify that $\Phi^{-1}(f) = \Cay(G, \widehat{f})$ for any $f\in L(\widehat{G})$.  In particular,  $\Phi( \Cay(G, w) )$ is real-valued and positive definite if and only if $w(\chi) = w(\overline{\chi}) \ge 0$ for any $\chi \in \widehat{G}$. 

The existence of $\Phi$ enables us to view weighted Cayley graphs as functions on $G$.  Consequently,  we can investigate spectral properties of these graphs through their corresponding functions.  The \emph{adjacency matrix} of $\Cay(G,w)$ is the matrix $A(G,w) = (a_{x,y})_{x,y\in G} \in \mathbb{C}^{|G| \times |G|}$ such that $a_{x,y} = w(x^{-1} y)$. 
\begin{lemma}\cite[Corollary 3.2]{L.Babai}\label{proposition 3.1}
Suppose that $w(\chi) = w(\overline{\chi}) \ge 0$ for any $\chi \in \widehat{G}$.  Let $\mu_1 \ge \dots \ge \mu_{|G|}$ be eigenvalues of $A(G,w)$.  Then $\mu_i = \nu_i ( \Phi( \Cay(G, w) ) )$ for each $1 \le i \le |G|$.
\end{lemma}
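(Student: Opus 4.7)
The plan is to reduce the lemma to the standard computation that characters diagonalize Cayley-type adjacency matrices, and then translate the resulting eigenvalues into values of $\Phi(\Cay(G,w))$ via the Fourier transform. I will not need any deep input beyond the basic facts recalled in Section~\ref{sec:pre}.

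First I would view each character $\chi \in \widehat{G}$ as a column vector in $\mathbb{C}^{|G|}$ indexed by $y \in G$, with $y$-th coordinate $\chi(y)$. A direct computation gives
\[
(A(G,w)\,\chi)(y) = \sum_{x \in G} w(y^{-1}x)\,\chi(x) = \chi(y) \sum_{z \in G} w(z)\,\chi(z),
\]
after the substitution $z = y^{-1}x$ and using $\chi(yz) = \chi(y)\chi(z)$. Thus $\chi$ is an eigenvector of $A(G,w)$ with eigenvalue $\lambda_\chi \coloneqq \sum_{z \in G} w(z)\,\chi(z)$. Since $\widehat{G}$ consists of $|G|$ characters that form an orthonormal basis of $L(G)$ (hence are linearly independent as vectors in $\mathbb{C}^{|G|}$), this produces a full eigenbasis of $A(G,w)$, so the multiset of all eigenvalues of $A(G,w)$ is exactly $\{\lambda_\chi : \chi \in \widehat{G}\}$, counted with multiplicity.

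Next I would rewrite $\lambda_\chi$ in terms of the Fourier transform. Because $\chi^{-1}(z) = \overline{\chi(z)}$, the definition $\widehat{w}(\rho) = \tfrac{1}{|G|}\sum_z w(z)\overline{\rho(z)}$ gives
\[
\lambda_\chi = \sum_{z \in G} w(z)\chi(z) = |G|\,\widehat{w}(\chi^{-1}).
\]
As $\chi \mapsto \chi^{-1}$ is a bijection of $\widehat{G}$, the multiset $\{\lambda_\chi\}_{\chi \in \widehat{G}}$ coincides with $\{|G|\widehat{w}(\chi)\}_{\chi \in \widehat{G}}$, which by definition \eqref{eq:Phi} is precisely the multiset of values of $\Phi(\Cay(G,w)) = |G|\widehat{w}$ on $\widehat{G}$.

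Finally, the hypothesis on $w$ ensures (via the discussion preceding the lemma) that $\Phi(\Cay(G,w))$ is real-valued, so both multisets consist of real numbers and can be sorted in non-increasing order. Matching them entry-by-entry yields $\mu_i = \nu_i\bigl(\Phi(\Cay(G,w))\bigr)$ for every $1 \le i \le |G|$. I do not anticipate any real obstacle here: this is essentially the classical spectral description of Cayley graphs of abelian groups (circulant matrices being the case $G = \mathbb{Z}/n\mathbb{Z}$), and the only subtlety is bookkeeping with the inverse $\chi \mapsto \chi^{-1}$ when translating between the eigenvalue formula and the chosen convention for $\widehat{w}$.
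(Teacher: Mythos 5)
The paper does not supply its own proof here: the lemma is simply cited from Babai's \emph{Spectra of Cayley graphs} (Corollary~3.2), so there is no internal argument to compare against. Your proof is a correct and self-contained derivation of exactly that classical fact, and it is the standard argument one would expect in the reference: each $\chi\in\widehat{G}$ is an eigenvector of $A(G,w)$ with eigenvalue $\sum_z w(z)\chi(z)=|G|\widehat{w}(\chi^{-1})$, the $|G|$ characters give a full eigenbasis, and since $\chi\mapsto\chi^{-1}$ permutes $\widehat{G}$ the eigenvalue multiset coincides with the value multiset of $\Phi(\Cay(G,w))=|G|\widehat{w}$; the symmetry/positivity hypothesis on $w$ is then used only to guarantee reality so that both lists can be sorted and matched. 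No gaps.
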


\begin{proposition}\label{prop:weighted Cayley}
For any fixed positive integer $s$,  we have 
\[
\lambda_2( \Cay(G, w) ) = O(|G|^{-4/s}) 
\] 
for any $w\in L(G)$ such that $|\supp(w)| \le s$,  $\supp(w) \not\subseteq \{x \in G: x^2 = 1\}$ and $w(x) = w(x^{-1}) \ge 0$ for all $x \in G$.  Here $\lambda_2( \Cay(G, w) )$ is the second smallest eigenvalue of the minus normalized Laplacian operator of $ \Cay(G, w)$.
\end{proposition}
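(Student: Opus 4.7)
The plan is to translate the statement about $\Cay(G, w)$ into a statement about the function $f \coloneqq \Phi(\Cay(G, w)) = |G|\widehat{w} \in L(\widehat{G})$ and then invoke Theorem~\ref{cor2.3}, with the abelian group $\widehat{G}$ itself (which has the same cardinality as $G$) playing the role of the ambient group. First I would verify that $f$ satisfies the hypotheses of Theorem~\ref{cor2.3}. The assumption $w(x) = w(x^{-1})$ together with $w \ge 0$ (in particular $w$ is real-valued) makes $f$ real-valued, and Fourier inversion gives $\widehat{f}(x) = w(x^{-1})$ for $x \in G \cong \widehat{\widehat{G}}$, so $w \ge 0$ forces $\widehat{f} \ge 0$, i.e.\ $f$ is positive definite. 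Moreover $\supp(\widehat{f}) = \supp(w)^{-1}$ has cardinality at most $s$, and since $x \mapsto x^{-1}$ preserves the set $\{x : x^2 = 1\}$, the assumption $\supp(w) \not\subseteq \{x : x^2 = 1\}$ is equivalent to $\supp(\widehat{f})$ containing an element of order greater than $2$, i.e., to the parameter $r$ of Theorem~\ref{cor2.3} satisfying $r \ge 1$.

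Next I would rewrite $\lambda_2(\Cay(G, w))$ in terms of $\nu_1(f)$ and $\nu_2(f)$. Because $w$ is symmetric and non-negative, Lemma~\ref{proposition 3.1} identifies the eigenvalues $\mu_1 \ge \cdots \ge \mu_{|G|}$ of $A(G, w)$ with $\nu_1(f) \ge \cdots \ge \nu_{|G|}(f)$. Since $\Cay(G, w)$ is regular with common degree $d = \sum_{z \in G} w(z) = f(1_{\widehat{G}}) = \nu_1(f)$ (the last equality uses $f \succeq 0$ together with Proposition~\ref{cor2.1}), the normalized Laplacian is $I - A/d$, whose eigenvalues $1 - \nu_i(f)/\nu_1(f)$ are listed in increasing order as $i$ increases. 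The smallest is $0$ and the second smallest is
\[
\lambda_2(\Cay(G, w)) = 1 - \frac{\nu_2(f)}{\nu_1(f)} = \frac{\nu_1(f) - \nu_2(f)}{\nu_1(f)}.
\]

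Feeding these observations into the asymptotic estimate~\eqref{cor2.3:eq2} of Theorem~\ref{cor2.3} applied to $f$ on $\widehat{G}$ then yields $\lambda_2(\Cay(G, w)) = O(|\widehat{G}|^{-4/s}) = O(|G|^{-4/s})$, which is exactly the desired conclusion. The only genuine subtlety I expect is bookkeeping under the Pontryagin dictionary $G \leftrightarrow \widehat{G}$: ``positive definite'' for $f$ now refers to non-negativity of its Fourier coefficients indexed by elements of $G$, and the ``$r \ge 1$'' hypothesis of Theorem~\ref{cor2.3} must be re-interpreted as a condition on $\supp(w) \subseteq G$ rather than on $\supp(\widehat{f})$ viewed inside $\widehat{\widehat{G}}$. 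Once this dictionary is set up carefully, the argument is essentially a one-line reduction to Theorem~\ref{cor2.3}.
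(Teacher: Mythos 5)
Your proof is correct and follows exactly the same route as the paper's: pass to $f=\Phi(\Cay(G,w))=|G|\widehat w\in L(\widehat G)$, check via $\widehat f(x)=w(x^{-1})=w(x)$ that $f$ is real-valued, positive definite, with $|\supp(\widehat f)|\le s$ and $r\ge 1$, then combine Lemma~\ref{proposition 3.1} with the asymptotic estimate~\eqref{cor2.3:eq2} of Theorem~\ref{cor2.3}. You have simply spelled out the bookkeeping (the identification of $\lambda_2$ with $(\nu_1(f)-\nu_2(f))/\nu_1(f)$ via regularity of the graph, and the translation of the ``$r\ge1$'' hypothesis into the condition on $\supp(w)$) that the paper leaves implicit.
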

\begin{proof}
Let $f \coloneqq \Phi( \Cay(G, w) ) =  |G| \widehat{w} \in L(\widehat{G})$.  Then $\widehat{f}(x) = w(x^{-1}) = w(x)$.  According to Lemma~\ref{proposition 3.1} and Theorem~\ref{cor2.3},  we have $\lambda_2( \Cay(G, w)) = (\nu_1(f) - \nu_2(f))/\nu_1(f) =  O(|G|^{-4/s})$. 
\end{proof}
We recall that each $s$-regular Cayley graph on $G$ is of the form $\Cay(G,  \delta_S)$,  where $S \subseteq G$ is symmetric with $|S| = s$.  Proposition~\ref{prop:weighted Cayley} implies that $\lambda_2( \Cay(G,  S)) = s (1 - O(|G|^{-4/s}))$,  as long as $S \not\subseteq \{x \in G: x^2 = 1\}$.  This is also proved in \cite{joel} by the covering argument for graphs.
\section{Applications}\label{sec:app}
This section is concerned with three applications of  Theorem~\ref{cor2.3}: (a) Estimating values of arbitrary functions on finite abelian groups; (b) Obtaining lower bounds for the relaxation time and mixing time of random walks on finite abelian groups; (c) Deriving a lower bound for the size of the sumset of two subsets of finite abelian groups.
\subsection{Function value estimation}
Let $f$ be a real-valued function on $G$ with real Fourier coefficients.  We denote values of $f$ in the non-increasing order $\nu_1(f) \ge \cdots \ge \nu_{|G|}(f)$.  In this subsection,  we apply Theorem~\ref{cor2.3} to obtain lower and upper bounds of $\nu_k(f)$,  for each $3 \le k \le |G|$.

Given a real-valued positive definite function $f$ on $G$,  we consider the weighted Cayley graph $\Cay(\widehat{G},  \widehat{f})$ defined as in Subsection~\ref{subsec:digression}.  We recall from \cite{LY10} that $\Cay(\widehat{G},  \widehat{f})$ is a \emph{Ricci flat graph}.  As a consequence,  we have the lemma that follows.
\begin{lemma}\cite[Proposition~1.6]{LY10}
  \label{theorem 3.1}
If $f \in L(G)$ is real-valued and positive definite,  then $\Cay(\widehat{G},\widehat{f})$ is $\CD(0,\infty)$.  
\end{lemma}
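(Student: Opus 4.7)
The plan is to show that $\Cay(\widehat{G},\widehat{f})$ fits Lin--Yau's notion of a Ricci flat graph, at which point the conclusion follows from the cited \cite[Proposition~1.6]{LY10}. First I would verify that the Cayley graph is genuinely an undirected weighted graph with non-negative weights: by Lemma~\ref{theorem 2.1}, real-valuedness of $f$ yields $\widehat{f}(\chi^{-1}) = \overline{\widehat{f}(\chi)}$, and positive definiteness forces $\widehat{f}(\chi) \in \mathbb{R}_{\ge 0}$, so the support $S \coloneqq \supp(\widehat{f})$ is symmetric and the weight function $\omega(\rho,\sigma) \coloneqq \widehat{f}(\rho^{-1}\sigma)$ satisfies $\omega(\rho,\sigma) = \omega(\sigma,\rho) \ge 0$. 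This also renders the graph regular with constant weighted degree $\sum_{\chi \in S} \widehat{f}(\chi) = \nu_1(f)$.

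Next I would exhibit the Ricci flat structure explicitly using group translations. For each $\chi \in S$, let $\eta_\chi: \widehat{G} \to \widehat{G}$ be the left translation $\eta_\chi(\rho) = \chi \rho$; because $\widehat{G}$ is abelian, these maps are bijections that commute pairwise. Around any base vertex $\rho$, the family $\{\eta_\chi(\rho)\}_{\chi \in S}$ enumerates $N(\rho)$, and the weight function is translation-invariant: $\omega(\eta_\chi(\rho),\eta_\chi(\sigma)) = \widehat{f}((\chi\rho)^{-1}\chi\sigma) = \widehat{f}(\rho^{-1}\sigma) = \omega(\rho,\sigma)$. Together with the commutativity $\eta_\chi \circ \eta_{\chi'} = \eta_{\chi'} \circ \eta_\chi$ and the symmetry $S = S^{-1}$, this supplies exactly the data required by the Ricci flat axioms of \cite{LY10}.

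With Ricci flatness in hand, I would invoke \cite[Proposition~1.6]{LY10} to conclude that $\Gamma_2(u) \ge 0$ for all $u \in L(\widehat{G},\mathbb{R})$, which by definition means $\Cay(\widehat{G},\widehat{f})$ is $\CD(0,\infty)$. The main obstacle here is not substantive but bookkeeping: one must precisely match the translation data $(\eta_\chi)_{\chi \in S}$ against the axiomatic formulation of Ricci flat used in \cite{LY10}, which in some presentations involves an auxiliary labelling of the edges incident to each vertex and a compatibility condition with the weighted degree. Conceptually, however, abelian Cayley graphs with symmetric non-negative weights are the prototypical Ricci flat examples, so the verification is essentially tautological once the definitions are unpacked, and the lemma reduces to a direct application of the cited proposition.
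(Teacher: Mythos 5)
Your proposal is correct and follows essentially the same route as the paper: both reduce the statement to the two facts, drawn from \cite{LY10}, that abelian Cayley graphs with symmetric non-negative weights are Ricci flat and that Ricci flat graphs are $\CD(0,\infty)$. You spell out the translation-invariance and symmetry of the weights in more detail than the paper (which simply cites \cite{LY10} for the Ricci flat property), but there is no substantive difference in the argument.
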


For a real-valued $f \in L(G)$,  we define $t \coloneqq \lvert \{ \chi \in \supp(\widehat{f}): \chi^2 = 1 \} \rvert$ and $r \coloneqq (|G| - t)/2$.  The proof of the following lemma is inspired by an spectral estimate for Cayley graphs \cite[Corollary~4.7]{LP18}.
\begin{lemma}\label{prop:bound nuk}
Let $C \coloneqq \left( \frac{20\sqrt{2}e}{e-1}\right)^2$ and let $f \in L(G)$ be a real-valued positive definite function with $r \ge 1$.  Then for any positive integer $m < \left(2^{-t} |G| \right)^{1/r}+1$,  we have 
\[
\nu_k(f) \ge \nu_1(f) - C k^2 \frac{\pi^2 |G| }{2m^2\left( |G| - 2^t (m-1)^r \right)} \nu_1(f)^2.
\] 
\end{lemma}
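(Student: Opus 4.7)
The plan is to realize $\nu_k(f)$ as the spectrum of the weighted Cayley graph $\mathcal{G} \coloneqq \Cay(\widehat{G}, \widehat{f})$ introduced in Subsection~\ref{subsec:digression}, then combine the eigenvalue comparison Theorem~\ref{theorem 1.4} with the spectral-gap bound already proved in Theorem~\ref{cor2.3}. Since $f$ is real-valued and positive definite, $\widehat{f}$ is non-negative real and symmetric under $\chi \mapsto \chi^{-1}$ (by Lemma~\ref{theorem 2.1}\eqref{theorem 2.1:item4}), so $\mathcal{G}$ is a bona fide undirected weighted graph; Lemma~\ref{theorem 3.1} then tells us that $\mathcal{G}$ is $\CD(0,\infty)$, which is exactly the hypothesis needed to invoke Theorem~\ref{theorem 1.4}.

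First I would identify the spectrum. By Lemma~\ref{proposition 3.1} the eigenvalues of the adjacency matrix $A(\widehat{G}, \widehat{f})$ are the values $\nu_i(\Phi(\mathcal{G})) = \nu_i(|G|\widehat{\widehat{f}\,})$, and Theorem~1(c) of the Preliminaries gives $|G|\widehat{\widehat{f}\,}(x) = f(x^{-1})$, which produces the same multiset of values as $f$. Thus the adjacency eigenvalues are $\nu_1(f) \ge \nu_2(f) \ge \cdots \ge \nu_{|G|}(f)$. Because $\mathcal{G}$ is vertex-transitive, every vertex has the same degree, and since the all-ones vector is an eigenvector of $A(\widehat{G},\widehat{f})$ with eigenvalue $\nu_1(f) = f(1) = \sum_{\chi}\widehat{f}(\chi)$, this common degree equals $d_{\mathcal{G}} = \nu_1(f)$. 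Consequently $-\Delta = d_{\mathcal{G}} I - A(\widehat{G}, \widehat{f})$ has eigenvalues $\lambda_i = \nu_1(f) - \nu_i(f)$, arranged in non-decreasing order with $\lambda_1 = 0$ and $\lambda_2 = \nu_1(f) - \nu_2(f)$.

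Next I would apply Theorem~\ref{theorem 1.4} to obtain
\[
\nu_1(f) - \nu_k(f) = \lambda_k \le C d_{\mathcal{G}} k^2 \lambda_2 = C k^2 \nu_1(f)\bigl(\nu_1(f) - \nu_2(f)\bigr),
\]
and then insert the upper bound $\nu_1(f) - \nu_2(f) \le \nu_1(f)\cdot\frac{\pi^2|G|}{2m^2(|G| - 2^t(m-1)^r)}$ coming from \eqref{cor2.3:eq1} in Theorem~\ref{cor2.3}. Rearranging yields exactly the stated inequality.

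The proof is largely a plug-and-play assembly once the identifications are correct; the only delicate point is the bookkeeping between $f$, $\widehat{f}$, and the corresponding Cayley graph on $\widehat{G}$, in particular verifying that the vertex degree of $\mathcal{G}$ matches $\nu_1(f)$ so that $d_{\mathcal{G}}$ and $\lambda_2$ enter Theorem~\ref{theorem 1.4} with the right normalizations. No hard new analytic input is required.
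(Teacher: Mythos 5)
Your proof is correct and follows the same route as the paper: use Lemma~\ref{proposition 3.1} and the Fourier inversion formula to identify the spectrum of $A(\widehat{G},\widehat{f})$ with $\{\nu_i(f)\}$, invoke Lemma~\ref{theorem 3.1} to get the $\CD(0,\infty)$ hypothesis, apply Theorem~\ref{theorem 1.4} with $d_{\mathcal G}=\nu_1(f)$, and finish with the spectral-gap bound from Theorem~\ref{cor2.3}. You spell out the degree computation and the eigenvalue bookkeeping a bit more explicitly than the paper does, but the argument is the same.
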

\begin{proof}
Let $\lambda_1\ge \cdots \ge \lambda_{|G|}$ be eigenvalues of the adjacency matrix $A(\widehat{G},  \widehat{f})$ of $\Cay(\widehat{G},  \widehat{f})$.  According to Lemma~\ref{proposition 3.1},  we have $\lambda_k  = \nu_k (f)$ for any $1 \le k \le |G|$.  By Theorem~\ref{theorem 1.4} and Lemma~\ref{theorem 3.1},  we obtain $ \nu_1(f) -\nu_k(f)\le Ck^2( \nu_1(f) - \nu_2(f)) \nu_1(f)$.  The proof is complete by Theorem~\ref{cor2.3}.
\end{proof}

We observe that for each real-valued function $f$ with real Fourier coefficients,  there exist real-valued positive definite functions $f_1$ and $f_2$ such that $f = f_1 - f_2$.  
\begin{lemma}\label{lem:general nuk}
Let $ f_1 $ and $ f_2 $ be real-valued positive definite functions on $G$ and let $f \coloneqq  f_1 - f_2 $.  For any $1 \le k \le |G|$,  we have 
  \[
  \nu_k(f_1) - \nu_1(f_2) \leq \nu_k(f) \leq \nu_1(f_1) - \nu_{n-k+1}(f_2).
  \]
\end{lemma}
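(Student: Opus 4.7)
My proof plan: I would treat $f_1, f_2 : G \to \mathbb{R}$ as arbitrary real-valued functions; the positive-definiteness hypothesis plays no role in the inequalities themselves, which are purely statements about order statistics of values of real-valued functions on the finite set $G$. Both bounds are established by short pigeonhole arguments, and there is no need to invoke Fourier analysis, positivity of coefficients, or any earlier result from the paper.

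For the lower bound $\nu_k(f_1) - \nu_1(f_2) \le \nu_k(f)$, I would select $k$ points $x_1, \dots, x_k$ in $G$ at which $f_1$ attains its $k$ largest values. On each such point $f_1(x_i) \ge \nu_k(f_1)$ and $f_2(x_i) \le \nu_1(f_2)$, hence
\[
f(x_i) = f_1(x_i) - f_2(x_i) \ge \nu_k(f_1) - \nu_1(f_2).
\]
This exhibits at least $k$ points of $G$ on which $f$ is $\ge \nu_k(f_1) - \nu_1(f_2)$, so the $k$-th largest value of $f$ satisfies $\nu_k(f) \ge \nu_k(f_1) - \nu_1(f_2)$.

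For the upper bound $\nu_k(f) \le \nu_1(f_1) - \nu_{n-k+1}(f_2)$ (with $n = |G|$), I would use a two-set pigeonhole. Define
\[
A = \{x \in G : f(x) \ge \nu_k(f)\}, \qquad B = \{x \in G : f_2(x) \ge \nu_{n-k+1}(f_2)\}.
\]
By the meaning of the $i$-th largest value, $|A| \ge k$ and $|B| \ge n - k + 1$, so $|A| + |B| \ge n+1$ and $A \cap B \neq \emptyset$. Picking any $x \in A \cap B$ gives
\[
f_1(x) = f(x) + f_2(x) \ge \nu_k(f) + \nu_{n-k+1}(f_2),
\]
while on the other hand $f_1(x) \le \nu_1(f_1)$. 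Rearranging yields the claimed inequality.

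No step presents a real obstacle; both halves are short counting arguments. The only subtle point to watch is the non-strict orientation of the inequalities defining $A$ and $B$, so that the cardinality bounds $|A| \ge k$ and $|B| \ge n-k+1$ hold simultaneously and force a non-empty intersection. An off-by-one slip there (e.g.\ using strict inequalities) would break the pigeonhole and, with it, the upper bound.
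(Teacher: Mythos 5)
Your proof is correct, and your opening observation is apt: the lemma is a purely order-statistic fact about real-valued functions on a finite set, and positive definiteness is never used in either argument. Your lower-bound proof is essentially identical to the paper's: pick representatives $x_1,\dots,x_k$ of the $k$ largest values of $f_1$, bound $f$ from below at each, and conclude. (Both you and the paper tacitly choose the $x_i$ to be distinct; this is needed so that the $k$ values $f(x_i)$ count toward the top $k$ of $f$, but it is a harmless bookkeeping point.)

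For the upper bound your route differs from the paper's. You run a direct two-set pigeonhole: $A=\{x: f(x)\ge\nu_k(f)\}$ has size $\ge k$, $B=\{x: f_2(x)\ge\nu_{n-k+1}(f_2)\}$ has size $\ge n-k+1$, hence $A\cap B\neq\emptyset$, and at any common point $x$ one reads off $\nu_1(f_1)\ge f_1(x)=f(x)+f_2(x)\ge\nu_k(f)+\nu_{n-k+1}(f_2)$. The paper instead observes that $-f=f_2-f_1$, applies the already-proved lower bound to $-f$ at index $n-k+1$, and uses the order-reversal identity $\nu_{n-k+1}(-f)=-\nu_k(f)$. The reflection argument is a one-liner and avoids the counting, while your pigeonhole is more self-contained and makes the mechanism fully explicit; your caution about using non-strict inequalities in the definitions of $A$ and $B$ is exactly the right thing to watch. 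Either proof is complete and correct.
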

\begin{proof}
Assume $ \nu_i(f_1) = f_1(x_i)$ for some $x_i \in G$ where $1 \le i \le k$.  Then
  \[
  f(x_i) = f_1(x_i) - f_2(x_i) \geq \nu_k(f_1) - \nu_1(f_2),
  \]
which implies $ \nu_k(f) \geq \nu_k(f_1) - \nu_1(f_2)$.  For the upper bound,  we consider $ -f = f_2 - f_1 $.  By the above argument,  we obtain
    \[
 -\nu_k(f) =   \nu_{n-k+1}(-f)  \ge \nu_{n-k+1}(f_2) - \nu_1(f_1). \qedhere
\]
\end{proof}
For $i = 1,2$,  we denote 
\[t_i  \coloneqq \lvert \{ \chi \in \supp(\widehat{f_i}): \chi^2 = 1 \} \rvert,\quad r_i \coloneqq (|G| - t_i)/2. 
\]
Lemma~\ref{lem:general nuk} together with Proposition~\ref{prop:bound nuk} leads to the following estimate of $\nu_k(f)$.
\begin{proposition}\label{prop:function value}[Lower and upper bound for $\nu_k(f)$]
Let $C \coloneqq \left( \frac{20\sqrt{2}e}{e-1}\right)^2$ and let $f$,  $f_1$,  $f_2$ be as in Lemma~\ref{lem:general nuk}.  Suppose $k \ge 3$.  If $r_1,  r_2 \ge 1$,  then for any positive integers $ m_i < (2^{-t_i}|G|)^{1/r_i}+1$,  $i = 1,2$,  we have 
\[
- \frac{Ck^2  \pi^2 |G| \nu_1(f_1)^2  }{2m_1^2 \left( |G| - 2^{t_1} (m_1-1)^{r_1} \right)} 
\leq \nu_k(f) - f(1)  
\leq  \frac{C (n-k+1)^2 \pi^2 |G| \nu_1(f_2)^2}{2m_2^2 \left( |G| - 2^{t_2} (m_2-1)^{r_2} \right)} .
\] 
\end{proposition}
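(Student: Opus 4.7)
The plan is to combine Lemma~\ref{lem:general nuk} (the $\pm$-decomposition inequality) with Lemma~\ref{prop:bound nuk} applied separately to $f_1$ and $f_2$. The first step is to record that, since $f_1$ and $f_2$ are real-valued positive definite, $f_i(1) = \sum_{\chi} \widehat{f_i}(\chi) = \nu_1(f_i)$, so $f(1) = \nu_1(f_1) - \nu_1(f_2)$. Therefore the two target inequalities are equivalent to
\begin{equation*}
\nu_k(f) \ge \nu_1(f_1) - \nu_1(f_2) - \frac{Ck^2\pi^2|G|\nu_1(f_1)^2}{2m_1^2\bigl(|G|-2^{t_1}(m_1-1)^{r_1}\bigr)}
\end{equation*}
and
\begin{equation*}
\nu_k(f) \le \nu_1(f_1) - \nu_1(f_2) + \frac{C(n-k+1)^2\pi^2|G|\nu_1(f_2)^2}{2m_2^2\bigl(|G|-2^{t_2}(m_2-1)^{r_2}\bigr)}.
\end{equation*}

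For the lower bound, I would apply the left-hand side of Lemma~\ref{lem:general nuk}, namely $\nu_k(f) \ge \nu_k(f_1) - \nu_1(f_2)$, and then feed in the estimate for $\nu_k(f_1)$ supplied by Lemma~\ref{prop:bound nuk} with the parameters $(m_1,t_1,r_1)$; the condition $m_1 < (2^{-t_1}|G|)^{1/r_1}+1$ and $r_1 \ge 1$ are precisely what that lemma requires. Adding $-\nu_1(f_2)$ to both sides of that bound yields the claim.

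For the upper bound, I would invoke the right-hand side of Lemma~\ref{lem:general nuk} to obtain $\nu_k(f) \le \nu_1(f_1) - \nu_{n-k+1}(f_2)$, and then apply Lemma~\ref{prop:bound nuk} to $f_2$ with index $n-k+1$ and parameters $(m_2,t_2,r_2)$, which gives a lower bound on $\nu_{n-k+1}(f_2)$; substituting and rearranging produces the advertised upper estimate. The assumption $k \ge 3$ ensures $n-k+1 \le n-2$, and together with $k \le n-1$ (which is implicit, since otherwise the $\nu_{n-k+1}$ index degenerates) guarantees that $n-k+1 \ge 2$, so that Theorem~\ref{theorem 1.4}—the engine underlying Lemma~\ref{prop:bound nuk}—is indeed applicable to $f_2$ at index $n-k+1$.

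There is no genuine obstacle: the argument is a clean concatenation of two previously proved statements. The only point requiring a brief sanity check is the index bookkeeping for the upper bound (passing from $\nu_k(f)$ to $\nu_{n-k+1}(f_2)$ and verifying that this index is in the admissible range $[2,|G|]$ for Lemma~\ref{prop:bound nuk}), which follows from the hypothesis $k \ge 3$ together with the tacit assumption $k \le n-1$.
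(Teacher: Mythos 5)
Your proof is correct and follows exactly the route the paper intends: decompose $f = f_1 - f_2$, observe $f(1) = \nu_1(f_1) - \nu_1(f_2)$ since each $f_i$ is positive definite, and splice the two-sided estimate of Lemma~\ref{lem:general nuk} with the one-sided estimate of Lemma~\ref{prop:bound nuk} applied once to $f_1$ at index $k$ and once to $f_2$ at index $n-k+1$. The only slightly off remark is your bookkeeping around $k\ge 3$: this hypothesis does not in fact make $n-k+1\ge 2$ (that would come from $k\le n-1$, as you note), and in any case Lemma~\ref{prop:bound nuk} also holds trivially at index $1$ because the correction term is nonnegative, so nothing degenerates if $k=n$. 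The restriction $k\ge 3$ in the statement is cosmetic rather than load-bearing for the chain of inequalities you use.
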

\subsection{Relaxation and mixing time estimation}
The second application of Theorem~\ref{cor2.3} is concerned with the relaxation and mixing times of random walks on finite abelian groups.  To begin with,  we briefly recall some necessary notions,  standard references for which are \cite{Saloff04,LPW09}.  A \emph{probability distribution} on a finite abelian group $G $ is a function $p: G \to [0, 1] $ such that $\sum_{g \in G} p(g) = 1$.  The \emph{random walk} on $G$ driven by $p$ is the \emph{Markov chain} with the \emph{state space} $G$ and \emph{transition probability matrix} $K = (K_{xy})_{x,y\in G}$ where $K_{xy} \coloneqq p(x^{-1}y)$.  We further assume that $p$ satisfies the following two conditions: 
\begin{enumerate}[(a)]
\item For each $x \in G$,  $p(x) = p(x^{-1})$.   \label{eq:p-condition1}
\item For each $x \in G$,  $\{xy:  y \in \supp(p)\}$ generates $G$.  \label{eq:p-condition2}
\end{enumerate}
The random walk driven by such $p$ is reversible \cite[Section~2.2]{Saloff04}, irreducible and aperiodic \cite[Proposition~2.3]{Saloff04}.

 Let $1 = \lambda_1  \ge \lambda_2 \ge \cdots \ge \lambda_{|G|} \ge -1$ be the eigenvalues of the transition probability matrix $K \in \mathbb{R}^{|G|\times |G|}$.  The \emph{relaxation time} $T_{\text{rel}}$ and the \emph{absolute relaxation time} $T^\star_{\text{rel}}$ of the random walk driven by $p$ are respectively defined by
\[
T_{\text{rel}} \coloneqq \frac{1}{1 - \lambda_2}, \quad T^\star_{\text{rel}} \coloneqq  \frac{1}{ 1 - \max\{ |\lambda_2|, |\lambda_{|G|}| \}}.
\]
\begin{proposition}[Lower bound of relaxation time]
  \label{relaxation time}
For any positive integer $s$ and probability $p$ on $G$ satisfying \eqref{eq:p-condition1},  \eqref{eq:p-condition2} and $|\supp(p)| \le s$,  we have $ T^\star_{\text{rel}}  = \Omega(|G|^{4/s})$ and $T_{\text{rel}} = \Omega(|G|^{4/s})$.
\end{proposition}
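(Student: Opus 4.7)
The plan is to reinterpret the transition matrix $K$ as the adjacency matrix of the weighted Cayley graph $\Cay(G,p)$ and to invoke Theorem~\ref{cor2.3} via the framework of Subsection~\ref{subsec:digression}. First I would set $f_0 \coloneqq \Phi(\Cay(G,p)) = |G|\widehat{p} \in L(\widehat{G})$. A short Fourier computation yields $\widehat{f_0} = p$, and combining this with $p(x) = p(x^{-1}) \ge 0$ shows that $f_0$ is real-valued and positive definite with $\nu_1(f_0) = f_0(1) = \sum_{x \in G} p(x) = 1$. By Lemma~\ref{proposition 3.1}, the eigenvalues $1 = \lambda_1 \ge \cdots \ge \lambda_{|G|}$ of $K$ coincide with $\nu_1(f_0) \ge \cdots \ge \nu_{|G|}(f_0)$, so in particular $\lambda_2 = \nu_2(f_0)$.

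The main step is then to apply Theorem~\ref{cor2.3} to $f_0$. Its hypothesis $r \ge 1$ requires $\supp(\widehat{f_0}) = \supp(p)$ to contain some element of order greater than $2$. If this fails then $\supp(p) \subseteq G[2] \coloneqq \{x \in G : x^2 = 1\}$, and because $\supp(p)$ must generate $G$ by condition~\eqref{eq:p-condition2}, we conclude $G \cong (\mathbb{Z}/2)^d$ with $d \le s$; in particular $|G| \le 2^s$ is bounded for fixed $s$. I would dispose of this degenerate regime by a direct argument: only finitely many such $G$ arise, and on the corresponding compact family of admissible $p$ both $T_{\text{rel}}$ and $T^\star_{\text{rel}}$ are bounded below by a positive constant thanks to aperiodicity, which forces $|\lambda_i| < 1$ for $i \ge 2$; since $|G|^{4/s}$ is also bounded there, the claim follows trivially in this regime.

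In the generic regime $|G| > 2^s$ we have $r \ge 1$, and the estimate~\eqref{cor2.3:eq2} gives
\[
1 - \lambda_2 = \frac{\nu_1(f_0) - \nu_2(f_0)}{\nu_1(f_0)} = O(|G|^{-4/s}),
\]
whence $T_{\text{rel}} = (1-\lambda_2)^{-1} = \Omega(|G|^{4/s})$. For $T^\star_{\text{rel}}$, note that once $|G|$ is large enough we have $\lambda_2 \ge 0$, so $\max\{|\lambda_2|, |\lambda_{|G|}|\} \ge \lambda_2$ and hence $T^\star_{\text{rel}} \ge T_{\text{rel}}$. I expect the principal friction to lie in the Fourier-theoretic bookkeeping (working on $\widehat{G}$ rather than on $G$) together with the careful handling of the $r = 0$ degenerate case; once these are in place the proposition is a direct corollary of Theorem~\ref{cor2.3}.
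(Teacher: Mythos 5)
Your proposal is correct and follows essentially the same route as the paper: identify $K$ with the adjacency matrix of $\Cay(G,p)$, set $f := |G|\widehat{p}$ (a real-valued positive definite function with $\nu_1(f)=1$), invoke the $\lambda_i = \nu_i(f)$ correspondence, apply Theorem~\ref{cor2.3} to get $1-\lambda_2 = O(|G|^{-4/s})$, and dispose of the case $\supp(p) \subseteq \{x : x^2=1\}$ via the observation that then $|G| \le 2^s$ is uniformly bounded. Two small remarks: (i) $T^\star_{\text{rel}} \ge T_{\text{rel}}$ holds unconditionally from the definitions (since $\max\{|\lambda_2|,|\lambda_{|G|}|\} \ge \lambda_2$ always), so your restriction to ``$|G|$ large enough so that $\lambda_2 \ge 0$'' is unnecessary; (ii) in the degenerate case the compactness language is more than you need—$T_{\text{rel}} = 1/(1-\lambda_2) \ge 1/2$ always because $\lambda_2 \ge -1$, which together with $|G|^{4/s} \le 16$ gives the bound directly, exactly as the paper does.
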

\begin{proof}
By definition,   we have $T^\star_{\text{rel}}  \ge T_{\text{rel}}$.  Thus,  it suffices to prove $T_{\text{rel}} = \Omega(|G|^{4/s})$.  If \( \supp(p) \not\subseteq \{ x \in G : x^2 = 1 \} \), we define \( f \coloneqq |G| \widehat{p} \) so that \( \widehat{f}(x) = p(x^{-1}) \) for any $x\in G$. Hence we have  $\lambda_2 = \nu_2(f) \leq \nu_1(f) = 1$.  According to Theorem~\ref{cor2.3},  we obtain $1 - \lambda_2 = O(|G|^{-4/s})$ and the lower bound of $T_{\text{rel}} $ follows immediately.  It is left to consider the case where \( \supp(p) \subseteq \{ x \in G : x^2 = 1 \} \).  By assumption,  $\supp(p)$ generates $G$.  Thus,  we must have $|G| \leq \sum_{k=0}^{s} \binom{s}{k} = 2^s$,  which implies $|G|^{4/s} \le 16$ and $T_{\text{rel}} = \Omega(|G|^{4/s})$ holds trivially.  
\end{proof}
\begin{remark}
In earlier works such as \cite{Greenhalgh89} and  \cite{Hough17},  it was proved that $T_{\text{rel}} = \Omega (p^{2/s})$ for $G = \mathbb{Z}/p\mathbb{Z}$.  Very recently,  this lower bound is extended to any finite abelian group \cite[Theorem~6.1]{MR23}.  According to Proposition~\ref{relaxation time},  the exponent of the existing lower bound can be further improved from $2/s$ to $4/s$.
\end{remark}
The exponent of the lower bound in Proposition~\ref{relaxation time} is tight,  as demonstrated by the following example.
\begin{example}
Let \(G = \text{Cay}(\mathbb{Z}/n\mathbb{Z}, \{1, -1\})\) and let \(p\) be the probability on $G$ defined by
  \[
  p(x) = 
  \begin{cases}
  \frac{1}{2} & \text{if } x \in \{1, -1\} \\
  0 & \text{otherwise}.
  \end{cases}
  \]
In this case,  we have $|G| = n$ and $s = 2$ and Proposition~\ref{relaxation time} yields $T_{\text{rel}} = \Omega(n^2)$ and  $T^\star_{\text{rel}} = \Omega(n^2)$.  On the other hand,  a direct calculation \cite[Example 12.3.1]{LPW09} indicates that $T_{\text{rel}}$ is of order $n^2$.  Moreover,  when $n$ is odd,  $T^\star_{\text{rel}}$ is also of order $n^2$.
\end{example}

Let \( G_1, G_2, \dots, G_d \) be finite abelian groups and let $p_1,\dots,  p_d$ be probability distributions on these groups,  respectively.  Suppose that $w$ is a probability distribution on $\{1,\cdots,d\}$.  Denote $G \coloneqq \prod_{j=1}^d G_j$ and define the matrix $K = (K_{\mathbf{x}, \mathbf{y}}) \in \mathbb{R}^{|G| \times |G|}$ by 
\[
K_{\mathbf{x}, \mathbf{y}} \coloneqq \sum_{j=1}^{d} \left( \prod_{i \neq j} \delta_{x_i,y_i} \right) w(j) p_j(x_j^{-1} y_j), 
\]
where $\mathbf{x} = (x_1,\dots,  x_d),  \mathbf{y} = (y_1,\dots,  y_d)$ are elements of $G$ and 
\[
\delta_{x,y} = \begin{cases}
1 \quad &\text{if~}x = y \\
0 \quad &\text{otherwise}.
\end{cases}
\]
The random walk on $G$ with the transition probability matrix $K$ is called the \emph{product chain} of random walks on $G_1,\dots,  G_d$ driven by $p_1,\dots,  p_d$,  respectively.
\begin{corollary}[Lower bound of relaxation time of product chain]\label{cor:pc}
Let $s$ be a fixed positive integer.  If $p_1,\dots,  p_d$ satisfy conditions~\eqref{eq:p-condition1} and \eqref{eq:p-condition2} with $\max \{ |\supp(p_j)|:1 \le j \le d,\; w(j) > 0 \} \le s$,  then we have $T^\star_{\text{rel}} =  \Omega( w(j)^{-1} |G_j|^{4/s})$ and $T_{\text{rel}} =  \Omega( w(j)^{-1} |G_j|^{4/s})$,  for any $1 \le j \le d$ such that $w(j) > 0$.
\end{corollary}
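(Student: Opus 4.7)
The plan is to reduce the spectral analysis of the product chain's transition matrix $K$ to those of the individual chains' matrices $K_j$, exploiting the tensor product structure $L(G) \cong \bigotimes_{j=1}^{d} L(G_j)$. First I would verify, by direct expansion of the defining formula for $K_{\mathbf{x},\mathbf{y}}$, the operator identity
\[
K = \sum_{j=1}^{d} w(j)\, K^{(j)},
\]
where $K^{(j)}$ denotes the linear operator on $L(G)$ that acts as $K_j$ on the $j$-th tensor factor and as the identity on every other factor. Unpacking the product $\prod_{i \ne j} \delta_{x_i, y_i}$ that appears in $K_{\mathbf{x},\mathbf{y}}$ reduces this step to a routine computation.

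Next, since the $K^{(1)}, \dots, K^{(d)}$ pairwise commute (they act non-trivially on disjoint tensor factors), they admit a simultaneous eigenbasis consisting of tensor products of eigenvectors of the individual $K_j$'s. Hence the eigenvalues of $K$ are precisely the sums $\sum_{j=1}^{d} w(j)\mu_j$, where each $\mu_j$ ranges independently over the eigenvalues of $K_j$. Each $K_j$ corresponds to an irreducible, aperiodic, reversible walk by hypotheses \eqref{eq:p-condition1} and \eqref{eq:p-condition2}, so its top eigenvalue $1$ is simple and $\lambda_2(K_j) < 1$. Choosing all $\mu_j = 1$ gives $\lambda_1(K) = 1$, while the second largest eigenvalue is attained by lowering exactly one coordinate from $1$ to $\lambda_2(K_{j_0})$ and maximizing over $j_0$, yielding
\[
1 - \lambda_2(K) = \min_{j:\, w(j) > 0}\, w(j)\bigl(1 - \lambda_2(K_j)\bigr).
\]

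From this identity I obtain $T_{\text{rel}}(K) \ge w(j)^{-1}\,T_{\text{rel}}(K_j)$ for any fixed $j$ with $w(j) > 0$. Proposition~\ref{relaxation time} applied to the $j$-th chain (whose $p_j$ inherits the hypotheses $|\supp(p_j)| \le s$, \eqref{eq:p-condition1}, and \eqref{eq:p-condition2}) then gives $T_{\text{rel}}(K_j) = \Omega(|G_j|^{4/s})$, from which the stated lower bound on $T_{\text{rel}}$ follows. The bound on $T^\star_{\text{rel}}$ comes for free from the elementary inequality $|\lambda_2(K)| \ge \lambda_2(K)$, which forces $\max(|\lambda_2(K)|, |\lambda_{|G|}(K)|) \ge \lambda_2(K)$ and hence $T^\star_{\text{rel}} \ge T_{\text{rel}}$.

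The only nontrivial step worth flagging is the tensor-product spectral decomposition of $K$; however, once the commutativity of the $K^{(j)}$'s is observed, this becomes essentially a book-keeping exercise, so I do not anticipate any serious obstacle. The rest is a direct invocation of the single-chain bound already proved in Proposition~\ref{relaxation time}.
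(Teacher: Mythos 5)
Your proof is correct and follows essentially the same route as the paper: reduce $T^\star_{\text{rel}}$ to $T_{\text{rel}}$, establish $T_{\text{rel}}(G) \ge w(j)^{-1} T_{\text{rel}}(G_j)$, and invoke Proposition~\ref{relaxation time}. The only difference is that you prove the product-chain spectral-gap identity from scratch via the tensor decomposition, whereas the paper simply cites it as \cite[Lemma~12.11]{LPW09}.
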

\begin{proof}
Since $T^\star_{\text{rel}} \ge T_{\text{rel}}$,  it is sufficient to establish the lower bound of $T_{\text{rel}}$.  For each $1 \le j \le d$,  we denote by $K_j$ the transition probability matrix of the random walk on $G_j$ driven by $p_j$.  According to \cite[Lemma~12.11]{LPW09},  we have 
\[
T_{\text{rel}}(G) = \max_{\substack{1 \le j \le d \\ w(j) > 0}} \{ w(j)^{-1} T_{\text{rel}}(G_j) \}.
\]
Here $T_{\text{rel}}(G)$ (resp. $T_{\text{rel}}(G_j)$) is the relaxation time of the product chain (resp.  random walk on $G_j$ driven by $p_j$).  This implies that $T_{\text{rel}}(G) \ge w(j)^{-1} T_{\text{rel}}(G_j) $ for any $1 \le j \le d$ such that $w(j) > 0$.  The desired lower bound of $T_{\text{rel}}$ follows from Proposition \ref{relaxation time}.
\end{proof}

Next,  we focus on estimating the mixing time, another essential characteristic of random walks. Given two measures $\mu$ and $\nu$ on $G$ and a positive integer $l$,  we respectively define 
\[
\lVert \mu  - \nu \rVert_{\text{TV}} \coloneqq \max_{A \subseteq G} \lvert \mu(A) - \nu(A) \rvert,\quad 
p^{(l)} \coloneqq \underbrace{p \ast \cdots \ast p}_{l \text{~times}}.
\]
Let $u_0$ be the uniform probability on $G$ defined by $u_0(x) = 1/|G|$ for each $x \in G$.  For each $0 < \varepsilon <1/2$,  the \emph{mixing time} of the random walk on $G$ driven by $p$ is 
\[
T_{\mix}(\varepsilon) \coloneqq \min \{m: \lVert p^{(m)}  - u_0 \rVert_{\text{TV}}  \le \varepsilon \}.
\]
\begin{proposition}[Lower bound of mixing time ]
\label{mixing time}
For any positive integer $s$ and probability $p$ on $G$ satisfying \eqref{eq:p-condition1},  \eqref{eq:p-condition2} and $|\supp(p)| \le s$,  we have 
\[
\frac{T_{\mix}(\varepsilon)}{\log \left( \frac{1}{2 \varepsilon} \right)} = \Omega(|G|^{4/s}),
\]
\end{proposition}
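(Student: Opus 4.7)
The plan is to reduce the mixing-time lower bound to the absolute relaxation time lower bound established in Proposition~\ref{relaxation time}, via the standard spectral inequality for reversible Markov chains.

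First I would invoke the classical bound relating mixing time to absolute relaxation time for reversible, irreducible, aperiodic chains:
\[
T_{\mix}(\varepsilon) \ge (T^\star_{\text{rel}} - 1) \log\!\left(\frac{1}{2\varepsilon}\right),
\]
which appears for instance as Theorem~12.5 in \cite{LPW09}. The hypotheses \eqref{eq:p-condition1} and \eqref{eq:p-condition2} on $p$ ensure the chain is reversible, irreducible and aperiodic, so this inequality applies directly.

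Next I would split into two cases, mirroring the proof of Proposition~\ref{relaxation time}. In the generic case where $\supp(p) \not\subseteq \{x\in G: x^2 = 1\}$, Proposition~\ref{relaxation time} yields $T^\star_{\text{rel}} = \Omega(|G|^{4/s})$, hence $T^\star_{\text{rel}} - 1 = \Omega(|G|^{4/s})$ (absorbing the $-1$ into the implicit constant, using $|G| \to \infty$; for small $|G|$ the bound is trivial by adjusting constants). In the degenerate case $\supp(p) \subseteq \{x \in G: x^2 = 1\}$, assumption \eqref{eq:p-condition2} forces $G$ to be generated by a set of at most $s$ involutions, giving $|G| \le 2^s$ and hence $|G|^{4/s} \le 16$, so the $\Omega(|G|^{4/s})$ conclusion holds trivially for any positive constant multiple of $\log(1/(2\varepsilon))$ by choosing the implicit constant small enough.

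Combining these two cases with the spectral inequality yields
\[
T_{\mix}(\varepsilon) \ge (T^\star_{\text{rel}} - 1) \log\!\left(\frac{1}{2\varepsilon}\right) = \Omega\!\left( |G|^{4/s} \log\!\left(\frac{1}{2\varepsilon}\right)\right),
\]
which is the desired conclusion after dividing by $\log(1/(2\varepsilon))$. The only nontrivial point is verifying that Proposition~\ref{relaxation time}, stated for $T_{\text{rel}}$ and $T^\star_{\text{rel}}$ simultaneously, provides the bound on $T^\star_{\text{rel}}$ that we actually need here; fortunately that proposition explicitly asserts both, so no additional work is required. There is no serious obstacle—the argument is essentially a one-line citation plus the case split already used for Proposition~\ref{relaxation time}.
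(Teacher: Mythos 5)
Your proof is correct and takes essentially the same route as the paper: invoke the spectral inequality $T_{\mix}(\varepsilon) \ge (T^\star_{\text{rel}} - 1)\log(1/(2\varepsilon))$ from \cite{LPW09} and then apply Proposition~\ref{relaxation time}. The case split you include is already subsumed in the proof of Proposition~\ref{relaxation time}, so the paper simply cites that proposition directly; otherwise the arguments coincide.
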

\begin{proof}
By \cite[Theorem~12.4]{LPW09},  we have 
\begin{equation}\label{mixing time:eq1}
T_{\mix}(\varepsilon)  \ge \left( T^{\star}_{\text{rel}} - 1 \right) \log \left( \frac{1}{2\varepsilon} \right),
\end{equation}
and the proof is complete by Proposition~\ref{relaxation time}.
\end{proof}

 Let $s$ be a positive integer.  Suppose that $G$ be a finite abelian group and $p$ is a probability distribution on $G$ satisfying \eqref{eq:p-condition1} and \eqref{eq:p-condition2} with $|\supp(p)| \le s$.  For $0 < \varepsilon < 1/2$,  we denote by $T_{\text{mix}}(\varepsilon)$ the mixing time of the product chain of random walks on $G_1 = \cdots = G_d = G$ driven by $p_1 = \cdots = p_d = p$ with $w: \{1,\dots,  d\} \to [0,1]$ defined by $w(j) \coloneqq 1/d$,  $1 \le j \le d$.  We may derive two different types of lower bounds for $T_{\text{mix}}(\varepsilon)$.
\begin{corollary}[Lower bound of mixing time for product chain]
For any fixed integer $s$,  we have $T_{\text{mix}}(\varepsilon) = \Omega(d|G|^{4/s}) \log \left( \frac{1}{2\varepsilon} \right)$ and $T_{\text{mix}}(\varepsilon) \ge  \Omega( d \log d|G|^{4/s}) + O(d) \log\left( \frac{1}{\varepsilon} - 1 \right)$.
\end{corollary}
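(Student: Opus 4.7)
The plan is to derive both inequalities by combining the product-chain relaxation-time bound in Corollary~\ref{cor:pc} with two classical mixing-versus-relaxation estimates from \cite{LPW09}, in the same spirit as the proof of Proposition~\ref{mixing time}.

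For the first inequality, I specialize Corollary~\ref{cor:pc} to the present setting where $G_j = G$ and $w(j) = 1/d$ for every $1 \le j \le d$. Since $w(j)^{-1} = d$ and $|G_j| = |G|$, Corollary~\ref{cor:pc} gives $T^{\star}_{\text{rel}} = \Omega(d\,|G|^{4/s})$ for the product chain on $G^d$. Substituting this into the inequality $T_{\text{mix}}(\varepsilon) \ge (T^{\star}_{\text{rel}} - 1)\log(1/(2\varepsilon))$ from \cite[Theorem 12.4]{LPW09}, which is exactly the estimate \eqref{mixing time:eq1} already reused in the proof of Proposition~\ref{mixing time}, immediately yields $T_{\text{mix}}(\varepsilon) = \Omega(d\,|G|^{4/s})\log(1/(2\varepsilon))$.

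For the second inequality, I appeal to the refined lower bound for $d$-fold product chains given in \cite[Chapter~20]{LPW09}. In its standard form, this bound asserts that if the single-coordinate chain has relaxation time $T_{\text{rel}}(G)$, then the product chain with uniform coordinate selection satisfies
\[
T_{\text{mix}}(\varepsilon) \ge \tfrac{1}{2}\,d \log d\,T_{\text{rel}}(G) \;+\; O(d)\log\!\bigl(1/\varepsilon - 1\bigr),
\]
where the extra $\log d$ factor originates from a coupon-collector argument for selecting each of the $d$ coordinates at least once. Plugging in $T_{\text{rel}}(G) = \Omega(|G|^{4/s})$, which is the content of Proposition~\ref{relaxation time}, produces $T_{\text{mix}}(\varepsilon) \ge \Omega(d \log d\,|G|^{4/s}) + O(d)\log(1/\varepsilon - 1)$.

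The main obstacle is identifying the precise product-chain mixing-time lower bound in \cite{LPW09} that isolates both the main $d \log d \cdot T_{\text{rel}}(G)$ term and the separate $O(d)\log(1/\varepsilon - 1)$ correction in exactly the stated form; once the correct citation is pinpointed, the two inequalities reduce to direct substitutions of the relaxation-time estimates already established in Proposition~\ref{relaxation time} and Corollary~\ref{cor:pc}. The remainder is bookkeeping of the asymptotic notation in $d$, $|G|$, and $\varepsilon$, with $s$ treated as fixed.
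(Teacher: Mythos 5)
Your argument follows the paper's exactly: the first inequality combines Corollary~\ref{cor:pc} with display~\eqref{mixing time:eq1}, and the second plugs Proposition~\ref{relaxation time} into a coupon-collector lower bound for the mixing time of $d$-fold product chains. The reference you were hunting for is \cite[Example~13.10]{LPW09} rather than Chapter~20; with that citation in place, substituting $T_{\text{rel}}(G)=\Omega(|G|^{4/s})$ yields the stated bound and your proof matches the paper's.
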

\begin{proof}
According to Corollary~\ref{cor:pc} and \eqref{mixing time:eq1},  we have $T_{\text{mix}}(\varepsilon) = \Omega(d|G|^{4/s}) \log \left( 1/(2\varepsilon) \right)$.
On the other hand,  by \cite[Example~13.10]{LPW09} and Proposition~\ref{relaxation time},  we obtain
\[
T_{\text{mix}}(\varepsilon) \ge  \Omega( d \log d|G|^{4/s}) + O(d) \log\left( \frac{1}{\varepsilon} - 1 \right).  \qedhere
\]
\end{proof}
\subsection{Sumset size estimation}
Our final application of Theorem~\ref{cor2.3} pertains to the size of the sumset of two subsets of finite abelian groups.  Let $A,  B$ be subsets of a finite abelian group $G$.  We denote
\[
A B \coloneqq \{x y: x\in A,  y \in B\},\quad A^{-1} \coloneqq \{x^{-1}: x\in A\}.
\]
The set $AB$ is called the \emph{sumset} of $A$ and $B$.  If $G$ is an additive group,  we denote the sumset of $A$ and $B$ by $A + B$.  We say that $A$ is \emph{symmetric} if $A = A^{-1}$.  The problem of estimating $|AB|$ lies at the core of additive combinatorics \cite{TV10}.  The goal of this subsection is to establish a lower bound of $|AB|$ by Theorem~\ref{cor2.3}.
\begin{lemma}\label{lem:AB}
Given positive definite functions $f,  g \in L(G)$,  we have $\supp(\widehat{fg}) = \supp(f) \supp(g)$.  If $g$ is also real-valued,  then $\supp(\widehat{fg}) = \supp(f) \supp(g)^{-1}$.  
\end{lemma}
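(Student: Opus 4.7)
The plan is to expand the pointwise product $fg$ in the Fourier basis of $L(G)$ and then exploit the non-negativity of Fourier coefficients guaranteed by positive definiteness. Multiplying the Fourier expansions $f=\sum_{\chi\in\widehat G}\widehat f(\chi)\chi$ and $g=\sum_{\rho\in\widehat G}\widehat g(\rho)\rho$, collecting terms according to $\xi:=\chi\rho$, and reading off Fourier coefficients yields
\[
\widehat{fg}(\xi)\;=\;\sum_{\chi\in\widehat G}\widehat f(\chi)\,\widehat g(\chi^{-1}\xi),
\]
which is, up to the normalization factor $|G|$ fixed by the paper's convolution convention, exactly $\widehat f * \widehat g$ on $\widehat G$.

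Now I would use the hypothesis $f,g\succeq 0$, which says $\widehat f(\chi),\widehat g(\rho)\ge 0$ for every $\chi,\rho\in\widehat G$. Consequently every summand in the displayed expression is non-negative, so $\widehat{fg}(\xi)>0$ if and only if at least one summand is strictly positive, i.e.\ iff there exists $\chi\in\supp(\widehat f)$ with $\chi^{-1}\xi\in\supp(\widehat g)$. Writing $\rho:=\chi^{-1}\xi$, this is precisely the statement that $\xi=\chi\rho\in\supp(\widehat f)\supp(\widehat g)$, giving the first identity.

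For the second identity, once $g$ is additionally real-valued, Lemma~\ref{theorem 2.1}\eqref{theorem 2.1:item4} gives $\widehat g(\chi^{-1})=\overline{\widehat g(\chi)}$, and since $\widehat g(\chi)\in\mathbb{R}_{\ge 0}$ by positive definiteness this collapses to $\widehat g(\chi^{-1})=\widehat g(\chi)$. Hence $\supp(\widehat g)$ is stable under inversion, so $\supp(\widehat f)\supp(\widehat g)=\supp(\widehat f)\supp(\widehat g)^{-1}$, and the second claim follows from the first.

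I do not anticipate any real obstacle: the whole argument is a direct Fourier computation coupled with a one-line non-negativity observation. The only careful point is to invoke positive-definiteness precisely at the step where non-negativity of every term rules out cancellation in $\widehat{fg}(\xi)=\sum_\chi \widehat f(\chi)\widehat g(\chi^{-1}\xi)$; without this hypothesis the support of $\widehat{fg}$ can be strictly smaller than the product of the supports.
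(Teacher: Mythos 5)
Your proof is correct and takes essentially the same route as the paper's: both reduce to the identity $\widehat{fg}=|G|\,\widehat f*\widehat g$ (you derive it by multiplying Fourier expansions, the paper simply cites it), use non-negativity of the Fourier coefficients to rule out cancellation, and apply Lemma~\ref{theorem 2.1}\eqref{theorem 2.1:item4} to get symmetry of $\supp(\widehat g)$ for the second claim. (Note you also silently correct the paper's notational slip: the supports in the statement should indeed read $\supp(\widehat f)$ and $\supp(\widehat g)$.)
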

\begin{proof}
We note that $\widehat{fg} = |G|\widehat{f} \ast \widehat{g}$.  Since both $f$ and $g$ are positive definite,  their Fourier coefficients are non-negative.  Thus,  for each $\chi \in \widehat{G}$,  $\chi \in \supp(\widehat{fg})$ if and only if there is some $\rho \in \widehat{G}$ such that $\widehat{f}(\chi \rho^{-1}) > 0$ and $\widehat{g}(\rho) > 0$.  This is further equivalent to $\chi \in \supp(f) \supp(g)$.  If $g$ is also real-valued,  Lemma~\ref{theorem 2.1} \eqref{theorem 2.1:item4} implies that $\supp(g) = \supp(g)^{-1}$.
\end{proof}

Suppose that $A,B$ are symmetric subsets of $G$ and $H$ is the subgroup of $G$ generated by $A \cup B$. Let $f,  g$ be real-valued positive definite functions on $\widehat{G}$ such that $\supp(\widehat{f}) = A$ and $\supp(\widehat{g}) = B$.  By Lemma~\ref{lem:AB},  we have $\supp(\widehat{fg}) = AB \subseteq H$.  Suppose that $h \coloneqq  \widetilde{fg}: \widehat{G}/H^{\ann}  \to \mathbb{R}$ is the function induced by $fg$ in Lemma~\ref{theorem 2.1} \eqref{theorem 2.1:item1} and $\pi: \widehat{G} \to \widehat{G}/H^{\ann} $ is the quotient map.  Proposition~\ref{cor2.1} indicates that 
\[
\nu_2( h ) = \mu_2(f,g) \coloneqq \max \{ f( \chi) g(\chi): \chi \in \widehat{G},\; f(\chi) g(\chi )< \nu_1(f) \nu_1(g)\}.
\]
\begin{proposition}[Lower bound of sumset size]\label{lem:sumset}
If $|A| |B| \le 2 \log |H|/ \log (m-1)$ for some integer $m \ge 5$,  then we have 
\[
2 \frac{ \log |H|}{ \log (m-1)} \ge |AB| \ge  2\frac{\log |H| + \log \left( 1 - \frac{\pi^2}{2 m^2}\left( 1 - \frac{ \mu_2(f,g) }{\nu_1(f)\nu_1(g)}\right)^{-1} \right) }{\log (m-1)}.
\]
\end{proposition}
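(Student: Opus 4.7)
The plan is to apply Theorem~\ref{cor2.3} to the induced function $h \coloneqq \widetilde{fg}$ on the finite abelian group $\widehat{G}/H^{\ann}$, whose order equals $|H|$ by Lemma~\ref{Lemma 1.1}. First I would record the needed structural facts. Since $f$ and $g$ are real-valued and positive definite, so is their pointwise product $fg$ (as $\widehat{fg}$ is, up to a positive scalar, the convolution of the non-negative Fourier transforms $\widehat{f}$ and $\widehat{g}$), and hence so is $h$. In particular,
\[
\nu_1(h) = h(1) = f(1)g(1) = \nu_1(f)\nu_1(g).
\]
By Lemma~\ref{theorem 2.1}\eqref{theorem 2.1:item3} the Fourier support of $h$ is in natural bijection with $\supp(\widehat{fg})$, which by Lemma~\ref{lem:AB} equals $AB$; therefore $|\supp(\widehat{h})| = |AB|$.

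The upper bound is immediate: $|AB| \le |A||B| \le 2\log|H|/\log(m-1)$ by the hypothesis. For the lower bound, I would set
\[
t_h \coloneqq |\{\rho \in \supp(\widehat{h}): \rho^2 = 1\}|, \qquad r_h \coloneqq (|AB| - t_h)/2.
\]
Since $m \ge 5$ gives $2 \le (m-1)^{1/2}$, I obtain
\[
2^{t_h}(m-1)^{r_h} \le (m-1)^{t_h/2 + r_h} = (m-1)^{|AB|/2},
\]
and the hypothesis $|A||B| \le 2\log|H|/\log(m-1)$ (together with $|AB| \le |A||B|$) yields $(m-1)^{|AB|/2} \le |H|$, so the admissibility condition $m < (2^{-t_h}|H|)^{1/r_h} + 1$ of Theorem~\ref{cor2.3} is satisfied.

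Applying Theorem~\ref{cor2.3} to $h$, combined with the identity $\nu_2(h) = \mu_2(f,g)$ recorded just before the proposition and $\nu_1(h) = \nu_1(f)\nu_1(g)$, produces
\[
\mu_2(f,g) \ge \nu_1(f)\nu_1(g)\left(1 - \frac{\pi^2|H|}{2m^2\bigl(|H| - (m-1)^{|AB|/2}\bigr)}\right).
\]
Rearranging this inequality to isolate $(m-1)^{|AB|/2}/|H|$ on one side and then taking logarithms of both sides yields precisely the claimed lower bound on $|AB|$.

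The main obstacle is essentially bookkeeping around the admissibility of Theorem~\ref{cor2.3}: confirming $r_h \ge 1$ and the strict version of $2^{t_h}(m-1)^{r_h} < |H|$. The degenerate case $r_h = 0$, in which every character in $\supp(\widehat{h})$ has order dividing two, forces $\mu_2(f,g)$ to be very close to $\nu_1(f)\nu_1(g)$, and the claimed bound becomes vacuous; the boundary case of equality in the hypothesis can be handled similarly, since then the logarithm on the right-hand side tends to $-\infty$ and the lower bound is trivially satisfied.
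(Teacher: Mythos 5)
Your proof is correct and follows essentially the same route as the paper: apply Theorem~\ref{cor2.3} to the induced function $h$ on $\widehat{G}/H^{\ann}$ (of order $|H|$), use $\nu_1(h) = \nu_1(f)\nu_1(g)$ and $\nu_2(h) = \mu_2(f,g)$ together with the bound $2^{t_h}(m-1)^{r_h} \le (m-1)^{|AB|/2}$ for $m\ge5$, and rearrange. Your derivation of $\nu_1(h) = \nu_1(f)\nu_1(g)$ by observing that $fg$ is itself positive definite is slightly cleaner than the paper's explicit factorization of $h$, and your remarks on the admissibility boundary and the degenerate case $r_h=0$ are reasonable side notes the paper leaves implicit (though the assertion that $r_h=0$ forces $\mu_2(f,g)$ near $\nu_1(f)\nu_1(g)$ is not literally accurate; the hypothesis simply makes the claimed bound hold trivially in that case).
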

\begin{proof}
The first inequality is obvious since $|AB| \le |A| |B|$. According to Lemma~\ref{theorem 2.1} \eqref{theorem 2.1:item3},  we may derive that  
\begin{align*}
h  &= \sum_{z \in AB} \widehat{fg}(z) \left[ (\pi^\ast)^{-1}(z) \right] \\
&= \sum_{z \in AB} |G| (\widehat{f} \ast \widehat{g}) (z) \left[ (\pi^\ast)^{-1}(z) \right] \\
&= \sum_{z \in AB} \sum_{x \in A} \widehat{f}(x) \widehat{g}(x^{-1}z) \left[ (\pi^\ast)^{-1}(z) \right] \\
&= \sum_{x \in A} \widehat{f}(x) \sum_{y \in B} \widehat{g}(y) \left[ (\pi^\ast)^{-1}(xy) \right] \\
&= \sum_{x \in A} \widehat{f}(x) \left[ (\pi^{*})^{-1} (x) \right]  \sum_{y \in B} \widehat{g}(y) \left[ \pi^{*})^{-1}(y) \right].
\end{align*}   
Thus we conclude that $\nu_1 (h) = \nu_1(f) \nu_1(g)$.  Let $s \coloneqq | AB |$.  Then $|\supp(\widehat{h})| = |\supp(\widetilde{\widehat{fg}})|  =  s$. Since $m \ge 5$,  by \eqref{cor2.3:eq1},  we have  
\[
\nu_2(h) \ge  \nu_1(h) \left( 1 - \frac{\pi^2 |H| }{2m^2 \left( |H|- (m-1)^{s/2} \right)}  \right),
\]
from which the second inequality follows immediately.
\end{proof}
\begin{example}  
Let $p \ge 17$ be a prime and let \( G \coloneqq \mathbb{Z}/p\mathbb{Z} \times \mathbb{Z}/p\mathbb{Z}  \).  
We consider two subsets of $G$: $A = \{ \mathbf{v}_1,-\mathbf{v}_1, \mathbf{v}_2,-\mathbf{v}_2 \}$ and $B = \{\mathbf{v}_3, -\mathbf{v}_3\}$,  where $\{\mathbf{v}_1, \mathbf{v}_2\} $ is a basis of $ \mathbb{Z}/p \mathbb{Z} \times \mathbb{Z}/p \mathbb{Z} $ and $\mathbf{v}_3\in  \mathbb{Z}/p \mathbb{Z} \times \mathbb{Z}/p \mathbb{Z}$ is nonzero.  We define two functions $f,g:  \widehat{G} \to \mathbb{R}$ by
\[
f( \rho ) = \sum_{\mathbf{x} \in A}  \rho(\mathbf{x}),\quad g( \rho ) = \sum_{\mathbf{x} \in B}  \rho(\mathbf{x}).
\]
It is clear that both $f$ and $g$ are positive definite functions on $\widehat{G}$ and $\supp(\widehat{f}) = A$,  $\supp(\widehat{g}) = B$.  Indeed,  by identifying $\widehat{\widehat{G\,}}$ with $G$,  we have  
  \[
f(\mathbf{x}) = 2\left(\cos\left(\frac{2\pi \mathbf{x} \cdot \mathbf{v}_1}{p}\right) + \cos\left(\frac{2\pi \mathbf{x} \cdot \mathbf{v}_2}{p}\right)\right),\quad g( \mathbf{x} ) = 2\cos\left(\frac{2\pi \mathbf{x} \cdot \mathbf{v}_3}{p}\right),
  \]
where $\mathbf{x}\in G$ and $\mathbf{y} \cdot \mathbf{z}$ denotes the usual inner product of vectors $\mathbf{y},  \mathbf{z} \in G$.  According to Proposition~\ref{lem:sumset},  we obtain
\[
8 \geq |AB| \geq 2 \frac{2\log p + \log \left( 1 - \frac{\pi^2}{50} \left( 1 - \frac{\mu_2(f,g)}{8} \right)^{-1} \right)}{\log 4}.
\]   
In particular,  if $p = 17$,  $\mathbf{v}_1 = (1,0)$, $\mathbf{v}_2 = (9,9)$ and $\mathbf{v}_3 = (2,0)$,  then it is straightforward to verify that $\mu_2(f,g) = 5.71$,  which implies $|AB| \geq 7$.
\end{example}

We conclude this subsection by a brief discussion on the sumset size estimation for $\mathbb{Z}$.  Let $A$ and $B$ be finite symmetric subsets of $\mathbb{Z}$. Suppose that $n$ and $m$ are the maximum values of the elements in $A$ and $B$,  respectively.  We denote by $\overline{A},  \overline{B}$ images of $A,B$ in the quotient group $\mathbb{Z}/(n+m+1) \mathbb{Z}$.  It is obvious that $|\overline{A} + \overline{B}| = |A + B|$ and Proposition~\ref{lem:sumset} applies if $|A + B|$ is small.

Moreover,  we recall from \cite[Lemma 2.1 ]{TV10} that $\lvert A + A\rvert \le \binom{|A|+1}{2}$ for any finite subset $A \subseteq \mathbb{Z}$.   We notice that $\nu_1(\widehat{\delta_A}) = |A|$ and $\overline{A}$ generates $\mathbb{Z}_p$ for each prime $p \ge 2n + 1$.  Therefore,  as a consequence of Proposition~\ref{lem:sumset},  we may derive the corollary that follows. 
\begin{corollary}
  Let $A$ be a finite symmetric subset of $\mathbb{Z}$ and let $m\ge 5$ be an integer number.  For any prime number $p$ such that $p \ge 2 \max_{a\in A}\{a\} + 1$ and $2 \log p/\log (m-1) \ge \binom{|A|+1}{2}    $,  we have 
  \[
  |A + A| \ge  2\frac{\log p+ \log \left( 1 - \frac{\pi^2}{2 m^2}\left( 1 - \frac{ \nu_2 \left( \widehat{\delta_{\overline{A}}}^2 \right) }{|A|^2}\right)^{-1} \right) }{\log(m-1)}.
  \]
  \end{corollary}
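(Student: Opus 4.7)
My plan is to recognize this corollary as a direct application of Proposition~\ref{lem:sumset} to the reduction of $A$ modulo $p$.

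First, I would pass from $\mathbb{Z}$ to the finite group $G = \mathbb{Z}/p\mathbb{Z}$ by setting $\overline{A} \subseteq G$ to be the image of $A$. The hypothesis $p \geq 2\max_{a \in A}\{a\} + 1$, together with the symmetry $A = -A$, guarantees that the reduction map restricted to $A + A$ is injective, so $|A + A| = |\overline{A} + \overline{A}|$. Since $p$ is prime and $A$ contains a non-zero integer (otherwise the stated inequality is vacuous), the subgroup generated by $\overline{A}$ is all of $\mathbb{Z}/p\mathbb{Z}$; thus the group $H$ of Proposition~\ref{lem:sumset} equals $\mathbb{Z}/p\mathbb{Z}$ with $|H| = p$.

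Second, I would construct the positive definite functions needed to invoke Proposition~\ref{lem:sumset} with $B = A$. Following the example preceding the corollary, I define $f(\chi) := \sum_{x \in \overline{A}} \chi(x)$ on $\widehat{G}$ and take $g = f$. By symmetry of $\overline{A}$ the function $f$ is real-valued; its Fourier transform is a positive multiple of $\delta_{\overline{A}}$, so $f$ is positive definite with $\supp(\widehat{f}) = \overline{A}$ and $\nu_1(f) = f(1) = |A|$. Under the canonical identification $\widehat{\widehat{G}} \cong G$, the pointwise product $f \cdot g = f^2$ is a scalar multiple of $\widehat{\delta_{\overline{A}}}^2$, so unwinding the normalization in the paper's Fourier convention yields $\mu_2(f,g)/(\nu_1(f)\nu_1(g)) = \nu_2(\widehat{\delta_{\overline{A}}}^2)/|A|^2$, matching the expression in the statement.

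Third, I would verify the hypothesis of Proposition~\ref{lem:sumset}. The essential inequality used in its proof is $(m-1)^{|\overline{A}+\overline{A}|/2} < p$, i.e., $|\overline{A}+\overline{A}| \log(m-1) \leq 2\log p$. Combining the classical bound $|A + A| \leq \binom{|A|+1}{2}$ from \cite[Lemma~2.1]{TV10} with the standing hypothesis $\binom{|A|+1}{2} \leq 2\log p/\log(m-1)$ supplies exactly this estimate. Substituting $|H| = p$ and the identification from the previous paragraph into the conclusion of Proposition~\ref{lem:sumset} then produces the claimed lower bound on $|A+A| = |\overline{A}+\overline{A}|$.

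The principal obstacle is the bookkeeping in the second step: one must confirm that the ratio $\mu_2(f,g)/(\nu_1(f)\nu_1(g))$ literally appearing in Proposition~\ref{lem:sumset} is correctly encoded as $\nu_2(\widehat{\delta_{\overline{A}}}^2)/|A|^2$ in the corollary, which requires tracking the factors of $|G|$ induced by the paper's normalization of the Fourier transform. A secondary subtlety is that the proof of Proposition~\ref{lem:sumset} only uses $|AB|\log(m-1) \leq 2\log|H|$ rather than the stricter $|A||B|\log(m-1) \leq 2\log|H|$, and it is precisely the Plünnecke-type inequality for integers that bridges between these two formulations so that the weaker hypothesis $\binom{|A|+1}{2} \leq 2\log p/\log(m-1)$ suffices.
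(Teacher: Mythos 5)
Your proof follows the same route as the paper and is essentially correct: reduce $A$ modulo $p$, verify that $\overline{A}$ generates $\mathbb{Z}/p\mathbb{Z}$, identify the positive definite function $f=g$ with support $\overline{A}$, and invoke Proposition~\ref{lem:sumset}. Your two ``bookkeeping'' observations are precisely what make the hypotheses line up: the ratio $\mu_2(f,g)/(\nu_1(f)\nu_1(g))$ equals $\nu_2(fg)/\nu_1(fg)$ and is invariant under rescaling $f$, so the $1/|G|$ factor in the paper's Fourier normalization washes out and the expression $\nu_2\bigl(\widehat{\delta_{\overline{A}}}^2\bigr)/|A|^2$ is the correct reading; and the proof of Proposition~\ref{lem:sumset} only actually uses $|AB|\log(m-1)\le 2\log|H|$, so the weaker hypothesis $\binom{|A|+1}{2}\le 2\log p/\log(m-1)$, combined with $|\overline{A}+\overline{A}|\le|A+A|\le\binom{|A|+1}{2}$, suffices in place of $|A|^2\le 2\log p/\log(m-1)$.

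One small inaccuracy worth flagging: the hypothesis $p\ge 2\max_{a\in A}\{a\}+1$ makes the reduction injective on $A$ itself (so $|\overline{A}|=|A|$ and $\supp(\widehat{f})=\overline{A}$ has the right cardinality), but not necessarily on $A+A$, which sits in an interval of $4\max_{a\in A}\{a\}+1$ consecutive integers and can wrap around when $p<4\max_{a\in A}\{a\}+1$. So the claim ``$|A+A|=|\overline{A}+\overline{A}|$'' may fail. This does not damage the argument, because reduction always gives a surjection from $A+A$ onto $\overline{A}+\overline{A}$, hence $|A+A|\ge|\overline{A}+\overline{A}|$, and that is the only direction needed to transfer the lower bound of Proposition~\ref{lem:sumset} to $|A+A|$. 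You should state only the inequality, not the equality.
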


\bibliographystyle{abbrv}
\bibliography{function_3}

\end{document}